\newcommand{\tenq}[1]{\hbox{\oalign{${#1}$\crcr\hidewidth$\scriptscriptstyle\bm{\approx}$\hidewidth}}}
\newcommand{\Reals}[1]{{\rm I\! R}^{#1}}
\DeclareMathOperator*{\argmax}{arg\,max}
\newtheorem{lemma}{Lemma}
\newtheorem{theorem}{Theorem} 
\newtheorem{remark}{Remark} 
\newcommand{\tend}[1]{\hbox{\oalign{${#1}$\crcr\hidewidth$\scriptscriptstyle\bm{\sim}$\hidewidth}}}
\journal{Journal of Computational Physics}
\begin{document}

\begin{frontmatter}



\title{Orthogonal greedy algorithm for linear operator learning with shallow neural network}

\author[JLU]{Ye Lin}
\ead{linye21@mails.jlu.edu.cn}
\affiliation[JLU]{
organization={School of Mathematics, Jilin University}, 
addressline={2699 Qianjin Street}, 
city={Changchun},
postcode={130012}, 
state={Jilin},
country={China}.}

\author[JLU]{Jiwei Jia\corref{*}}
\ead{jiajiwei@jlu.edu.cn}
\cortext[*]{Corresponding Author.}

\author[TXST]{Young Ju Lee}
\ead{yjlee@txstate.edu}
\affiliation[TXST]{
organization={Department of Mathematics, Texas State University}, 
addressline={601 University Drive}, 
city={San Marcos},
postcode={78666}, 
state={Texas},
country={U.S.A.}.}

\author[JLU]{Ran Zhang}
\ead{zhangran@jlu.edu.cn}


\begin{abstract}
Greedy algorithms, particularly the orthogonal greedy algorithm (OGA), have proven effective in training shallow neural networks for fitting functions and solving partial differential equations (PDEs). In this paper, we extend the application of OGA to the tasks of linear operator learning, which is equivalent to learning the kernel function through integral transforms. Firstly, a novel greedy algorithm is developed for kernel estimation rate in a new semi-inner product, which can be utilized to approximate the Green's function of linear PDEs from data. Secondly, we introduce the OGA for point-wise kernel estimation to further improve the approximation rate, achieving orders of accuracy improvement across various tasks and baseline models. In addition, we provide a theoretical analysis on the kernel estimation problem and the optimal approximation rates for both algorithms, establishing their efficacy and potential for future applications in PDEs and operator learning tasks.
\end{abstract}



\begin{keyword}
Greedy algorithm \sep Green's function \sep Operator learning \sep Kernel estimation
\end{keyword}

\end{frontmatter}

\section{Introduction}

In recent years, deep neural networks have emerged as a powerful tool for solving partial differential equations (PDEs) in a wide range of scientific and engineering domains \cite{review-karniadakis2021physics}. Approaches in this area can be broadly classified into two main categories: (1) single PDE solvers and (2) operator learning. Single PDE solvers, such as physics-informed neural networks(PINNs)\cite{pinn-raissi2019physics}, the deep Galerkin method\cite{dgm-sirignano2018dgm}, the deep Ritz method\cite{dritz-yu2018deep}, optimize the deep neural network by minimizing a given loss function related to the PDE. These methods are specifically designed to solve a given instance of the PDE. In contrast, operator learning involves using deep neural networks to learn operators between function spaces, allowing for the learning of solution operators of PDEs from data pairs. 

Recently, several operator learning methods have been proposed, including deep Green networks (DGN)\cite{green-boulle2022gl}, deep operator networks (DON)\cite{don-lu2021learning}, and neural operators (NOs)\cite{no-li2020fourier}. While the underlying inspirations for these methods differ, they all establish a connection between operator learning and integral transforms. For linear PDEs, the solution operator can be represented as an integral transform involving a Green's function, typically unknown. Consequently, the task of learning a linear operator becomes equivalent to estimating a kernel function. Modeling this unknown function using a neural network and learning it from data is a straightforward approach, which has been realized in the work of GreenLearning networks (GL)\cite{green-boulle2022gl}. A primary challenge in operator learning is the optimization of deep neural network. Gradient-based methods, such as the Adam optimizer\cite{opt-kingma2014adam} or the Broyden–Fletcher–Goldfarb–Shanno (BFGS) algorithm\cite{opt-fletcher2000practical}, are commonly employed. However, the training procedure is prone to getting trapped in local minimum\cite{greentheory-boulle2023ellipticefficient}, which can significantly deviate from the theoretical approximation rate.

Although deep neural networks have achieved great success, considerable work has also been devoted to using shallow neural networks for solving PDEs, such as random feature methods\cite{rfm-chen2022rfm, rfm-Lamperski2024ApproximationWR, rfm-Rahimi2007RandomFF, rfm-Weinan2019ACA, rfm-zhang2024transferable}, extreme learning machines \cite{em-DONG2021114129, em-HUANG2006489, em-Lee2024AND}, and finite neuron methods \cite{fnm-xu2020fnm, fnm-park2022fnm}. Notably, greedy algorithm, especially orthogonal greedy algorithm(OGA) \cite{oga-barron2008approximation, oga-pati1993OMP, oga-barron1993universal, woga-temlyakov2000weak}, has been demonstrated to achieve optimal approximation rates \cite{oga-siegel2023greedy, oga-siegel2024entropy, oga-park2024randomized, ogatheory-siegel2024sharp} in shallow neural network optimization. To the best of our knowledge, shallow neural network methods are primarily applied to solve specific instances of target PDEs, rather than learning solution operators of PDEs. 

To avoid the optimization challenges of deep neural networks and to expand the application of shallow neural networks in operator learning, we design two novel OGA frameworks for shallow neural network optimization specifically for linear operator learning or kernel estimation tasks. A new semi-inner product is defined using finite data and kernel integrals, which is the key to establish the new OGA. We also provide a theoretical convergence rate of OGA in this context, which is also valid in the semi-norm. Our theory indicates that the larger the null space for the operator that corresponds to the semi-norm, the faster the convergence rate is expected. To further tackle the practical issue of shallow neural network approximating functions with sharp transitions or rapid oscillations and to enhance the approximation rate, we learn the kernel function point-wisely by OGA. The point-wise kernel estimation approximates a set of $d$-dimensional functions rather than $2d$-dimensional kernel function, leading to a faster optimal approximation rate. To validate the effectiveness of the proposed method, we consider multiple operator learning problems for linear PDEs and kernel estimation problems, including cases in 1D to 3D, as well as problems defined on irregular domains. Compared to baseline methods, the proposed approach achieves an order-of-magnitude improvement in accuracy.

Our main contributions can be summarized as follows:

\begin{itemize}
\item[(1)] We develop a novel OGA in a new semi-inner product for the shallow neural network optimization, which guarantees optimal approximation rates theoretically for linear operator learning and kernel estimation tasks.
\item[(2)] We introduce another new OGA framework for shallow neural network optimziation in point-wise kernel estimation tasks, referred to as PW-OGA, which further improves the convergence rate compared to directly kernel estimation using OGA.
\item[(3)] We validate both OGA and PW-OGA methods by learning Green's functions from various PDEs or kernel integrals. Numerical results demonstrate that both methods achieve the theoretical optimal approximation rate and outperform basline models by orders of magnitude in terms of accuracy in most cases.
\end{itemize}

Throughout the paper, we use $\tend{f}$ and $\tenq{G}$ to denote the vector and matrix, respectively. For some domain $\Omega \in \mathbb{R}^{d}$, the notation $L^2(\Omega)$ denotes the space of square integral function on $\Omega$, i.e., the $L^2$ function space with $L^2$ norm, $\|\cdot\|_{L^2(\Omega)}$. For $A, B \in \mathbb{R}$, we write $A \lesssim B$ if there is a generic constant $c$ such that $A \leq c B$. Lastly, we denote the Euclidean inner product in $\mathbb{R}^d$ by $\ell^2$.

The rest of the paper is organized as follows. In Section \ref{sec:ol}, we describe the problem settings of operator learning and discuss its connection to kernel estimation. In Section \ref{sec:snn}, we review the theory of nonlinear dictionary approximation with shallow neural network and present the OGA and PW-OGA methodology in detail. In Section \ref{sec:result}, we present several numerical results in which optimal approximation rates are verified, and we also compared the results with several state-of-the-art operator learning methods to demonstrate the effectiveness of our methods. Finally, in Section \ref{sec:conclusion}, we discuss the conclusions of our work.

\section{Operator learning and kernel estimation}
\label{sec:ol}
\subsection{Operator learning}
Consider Banach spaces $\mathcal{U}$ and $\mathcal{V}$, which consists of functions defined on a $d$-dimensional spatial domain $\Omega \subset \mathbb{R}^d$. Let $\mathcal{A} : \mathcal{U} \rightarrow \mathcal{V}$ be an operator mapping functions from $\mathcal{U}$ to $\mathcal{V}$. Given a set of observation function pairs $\{f_j, u_j\}^{N}_{j=1}$, where $f_j \in \mathcal{U}$ and $u_j \in \mathcal{V}$, the goal of operator learning is to learn a model $\mathcal{A}_{\theta}$ with finite parameters $\theta \in \mathbb{R}^p$ that approximates $\mathcal{A}$ by minimizing:
\begin{equation}
    \label{eq:operator_learning}
    \min_{\theta} \frac{1}{N} \sum_{j} \mathcal{C} (\mathcal{A}_{\theta}(f_j), u_j),
\end{equation}
where $\mathcal{C}: \mathcal{V} \times \mathcal{V} \rightarrow \mathbb{R}$ is a cost functional, such as relative $L^2(\Omega)$ norm:
\begin{equation}
    \label{eq:operator_learning_cost}
\mathcal{C} (\mathcal{A}_{\theta}(f(\mathbf{x})), u(\mathbf{x})) := \frac{\| u(\mathbf{x}) - \mathcal{A}_{\theta}(f(\mathbf{x})) \|_{L^2(\Omega)}}{\| u(\mathbf{x}) \|_{L^2(\Omega)}}. 
\end{equation}

Several operator learning methods have been recently proposed to solve PDEs, such as deep operator network (DeepONet) \cite{don-jin2022mionet, don-lu2021learning, don-wang2021learning, don-wang2022improved, don-gao2024adaptive, don-guo2024ib}, neural operators(NOs) \cite{no-kovachki2023neural, no-li2020fourier, no-li2020multipole, no-li2020gno, no-li2023fourier, no-rahman2023uno, no-tran2023factorized, no-you2022learning}, transformer based methods \cite{trans-cao2021choose, trans-guo2022transformer, trans-hao2023gnot, trans-kissas2022learning, trans-lee2022meshindependent, trans-li2024scalable} and multi-grid based methods \cite{mg-he2019mgnet, mg-he2024mgno, mg-zhu2023enhanced, mg-zhu2023fv}. These method employs deep neural networks(DNN) to learn the actions of target operators. 

\subsection{Linear operator learning and kernel estimation}
Under the assumption that the underlying differential operator $\mathcal{L}$ is a linear boundary value problem of the form, 
\begin{equation} 
\label{eq:linear_pde}
\begin{split}
\mathcal{L}(u)(\mathbf{x}) &= f(\mathbf{x}), \quad \mathbf{x} \in \Omega, \\
u(\mathbf{x}) &= 0, \quad\quad \mathbf{x} \in \partial\Omega,    
\end{split}
\end{equation}
the corresponding solution operator $\mathcal{A}$ can be written as the integral operators with Green's function function $G(\mathbf{x}, \mathbf{y})$ as 
\begin{equation} 
\label{eq:green_func}
\mathcal{A}(f)(\mathbf{x}) = u(\mathbf{x}) = \int_{\Omega} G(\mathbf{x},\mathbf{y}) f(\mathbf{y}) \mathrm{d} \mathbf{y}, \quad \mathbf{x},\mathbf{y} \in \Omega.
\end{equation}
In this setting, the linear operator learning task is equivalent to estimating the kernel function from data. For simplicity, we use the terms Green's function, kernel function and kernel interchangeably.

A key advantage of this approach is that it transforms the operator learning problem into a function approximation problem, which is more interpretable. Another benefit is that, after the kernel has been learned, the inference step only requires numerical kernel integral estimation, avoiding the need for a computationally intensive neural network forward pass. Several methods have been developed to approximate Green's function by neural network\cite{green-boulle2022gl, green-zhang2022modnet, green-gin2021deepgreen, green-lin2023bi, green-ji2023deep, green-peng2023deep, green-teng2022learning, green-lin2024green}. One of representative work is GreenLearning(GL)\cite{green-boulle2022gl}, which learns the empirical Green's function $G_{\theta}$ from data pairs by minimizing:
\begin{equation}
\label{eq:gl_loss}
\frac{1}{N} \sum_j \frac{\left \| u_j(\mathbf{x}) - \int_\Omega G_{\theta}( \mathbf{x}, \mathbf{y}) f_j( \mathbf{y}) \mathrm{d} \mathbf{y} \right \|_{L^2(\Omega)}}{\|u_j(\mathbf{x})\|_{L^2(\Omega)}},
\end{equation}
where $G_{\theta}$ is the neural network surrogate model of Green's function. Figure (\ref{fig:gl-arch}) presents the two main steps of GL: kernel function evaluation by a neural network and kernel integral estimation. 

\begin{figure}[h]
\centering
\includegraphics[scale=.7]{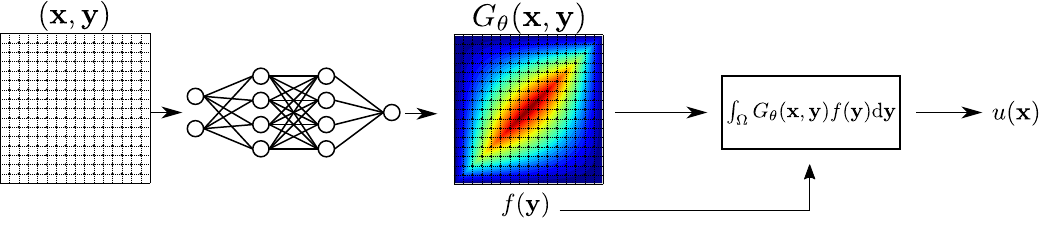}
\caption{Schematic of Green Learning}
\label{fig:gl-arch}
\end{figure}

Like many neural network-based operator learning methods, the approximation errors are mainly arise from numerical integration and optimization\cite{greentheory-boulle2023ellipticefficient}, which prevent achieving the theoretical order of accuracy. Currently, the most widely used approach for training neural networks is stochastic gradient descent(SGD) or its variant, such as ADAM\cite{opt-kingma2014adam}, often combined with a learning rate scheduler. However, these methods can often get stuck in local minima rather than converging to the global minimum. While these stochastic optimization methods can yield acceptable accuracy for many problems, training deep neural networks remains computationally expensive and challenging for error analysis\cite{oga-siegel2023greedy}. In contrast, the optimal approximation rate for shallow neural network can be achieved using the orthogonal greedy algorithm(OGA), with rigorous theoretical guarantees\cite{oga-siegel2023greedy, oga-siegel2024entropy, oga-park2024randomized}. To mitigate the optimization difficulties and extend the applicability of shallow neural network, we propose two new OGA-based methods, building on \cite{oga-park2024randomized}, to optimize shallow neural network for linear operator learning and kernel estimation tasks. These methods theoretically guarantee optimal approximation rates and demonstrate superior performance in numerical experiments.

\section{Nonlinear dictionary approximation with shallow neural network}
\label{sec:snn}
\subsection{Nonlinear dictionary approximation}

Let $H$ be a Hilbert space and $\mathbb{D} \subset H$ a dictionary of basis functions with a norm $\|\cdot\|_H$. The nonlinear dictionary approximation of a target function $G$ is achieved by expressing it as a sparse linear combination of $n$-term dictionary elements, as 
\begin{equation}
\label{eq:expansion}
G_n = \sum_{i=1}^{n} \alpha_i g_i,
\end{equation}
where $g_i \in \mathbb{D}$ depend upon the function $G$. Following \cite{ogatheory-siegel2024entropy, ogatheory-siegel2024sharp}, we assume that the target function $G$ lies in the convex hull of the dictionary $\mathbb{D}$. The closed convex hull of $\mathbb{D}$ can be written as 
\begin{equation}
\label{eq:convex_hull}
\begin{split}
{\rm co}(\mathbb{D}) = \overline{ \Biggl\{  \sum_{i=1}^{n} \alpha_i g_i : n \in \mathbb{N}, g_i \in \mathbb{D}, \sum_{i=1}^n |\alpha_i| \leq 1 \Biggl\} }, 
\end{split}
\end{equation}
and the gauge norm $\| \cdot \|_{\mathcal{L}_1(\mathbb{D})}$ of ${\rm co}{(\mathbb{D})}$ denotes,
\begin{equation}
\label{eq:gauge_norm}
\begin{split}
\|v \|_{\mathcal{L}_1(\mathbb{D})} = \inf \left \{ \sum_{i}|c_i| : v = \sum_{g_i \in \mathbb{D}} c_i g_i \right \}, 
\end{split}
\end{equation}
where $\mathcal{L}_1(\mathbb{D})$ is the subspace of $H$ defined by the variation norm,
\begin{equation}
\label{eq:var_norm}
\begin{split}
\mathcal{L}_1(\mathbb{D}) := \inf \{ G \in H : \|G\|_{\mathcal{L}_1(\mathbb{D})} < \infty \}.
\end{split}
\end{equation}
With $\|\mathbb{D}\| = \sup_{g \in \mathbb{D}} \|g\|_H \leq 1$, given a function $G \in \mathcal{L}_1(\mathbb{D})$, \cite{ogatheory-pisier1981remarques} gives a classical result that there exists an $n$-term dictionary elements which satisfies
\begin{equation}
\label{eq:approx_rate}
\begin{split}
\inf_{G_n \in \Sigma_{n}(\mathbb{D})} \| G - G_n \|_{H} \lesssim \|G\|_{\mathcal{L}_1(\mathbb{D})} n^{-\frac{1}{2}},
\end{split}
\end{equation}
where 
\begin{equation}
\label{eq:model_class}
\begin{split}
\Sigma_{n}(\mathbb{D}) &= \Biggl\{ \sum_{i=1}^{n} \alpha_i g_i : \alpha_i \in \mathbb{R}, g_i \in \mathbb{D} \Biggl\}, 
\end{split}
\end{equation}
is the set of $n$-term function class.

\subsection{Shallow neural network for function approximation in a Hilbert space}

Typically, we may choose shallow neural networks to construct dictionary $\mathbb{D}$. A shallow neural network with input $\mathbf{x} \in \mathbb{R}^d$ is given by 
\begin{equation}
\label{eq:shallownet}
G_n(\mathbf{x}) = \sum_{i=1}^{n} \alpha_i \sigma ( \mathbf{w}_i \cdot \mathbf{x} + \beta_i),
\end{equation}
where $\sigma$ is an activation function, $n$ is the number of neurons, $\{\alpha_i\}_{i=1}^n \subset \mathbb{R}$, $\{\mathbf{w}_i\}_{i=1}^n \subset S^{d-1}$, and $\{\beta_i\}_{i=1}^n \subset \mathbb{R}$ are learnable parameters, and $S^{d-1}$ represents the unit $d$-sphere in $\mathbb{R}^d$. For $\mathrm{ReLU}^k$ activation function $\sigma_k(t) = \max(0,t)^k$, where $t \in \mathbb{R}$, following \cite{oga-siegel2023greedy, oga-park2024randomized, ogatheory-siegel2024sharp}, we define shallow neural network function class $\Sigma_{n,M}(\mathbb{D})$ for some $M>0$, as 
\begin{equation}
\label{eq:shallow_dict}
\begin{split}
\Sigma_{n, M}(\mathbb{D}) &= \Biggl\{ \sum_{i=1}^{n} \alpha_i g_i : \alpha_i \in \mathbb{R}, g_i \in \mathbb{D}, \sum_{i=1}^n |\alpha_i| \leq M \Biggl\}, \\
\end{split}
\end{equation}
where dictionary $\mathbb{D}$ is a collection of $\mathrm{ReLU}^k$ shallow neural networks,
\begin{equation}
\label{eq:shallow_nn}
\begin{split}
\mathbb{D} &:= \bigl\{ \pm \sigma_{k}(\mathbf{w} \cdot \mathbf{x} + \beta) : \mathbf{w} \in S^{d-1}, \beta \in [c_1, c_2] \bigl\},
\end{split}
\end{equation}
with two constants $c_1$ and $c_2$ are chosen to satisfy
\begin{equation}
\label{eq:cbound}
\begin{split}
c_1 \leq \inf\{\mathbf{w} \cdot \mathbf{x}: \mathbf{x} \in \Omega, \mathbf{w} \in S^{d-1}\} \leq \sup \{\mathbf{w} \cdot \mathbf{x}: \mathbf{x} \in \Omega, \mathbf{w} \in S^{d-1}\} \leq c_2.
\end{split}
\end{equation}
A sharp bound on the approximation rate of $\Sigma_{n, M}(\mathbb{D})$ was developed in \cite{ogatheory-siegel2024sharp}, especially for $H = L^2(\Omega)$,
\begin{equation}
\label{eq:approx_rate_relu}
\begin{split}
\inf_{G_n \in \Sigma_{n,M}(\mathbb{D})} \| G - G_n \|_{H} \leq C \|G\|_{\mathcal{L}_1(\mathbb{D})} n^{-\frac{1}{2} - \frac{2k+1}{2d}},
\end{split}
\end{equation}
where $\Sigma_{n,M}(\mathbb{D})$ and $\mathcal{L}_1(\mathbb{D})$ were defined as (\ref{eq:shallow_dict}) and (\ref{eq:var_norm}), respectively, and $C$ is a constant independent of $G$ and $n$. 

\subsection{Inner product in the linear operator learning setting}\label{sec:green}

We consider a kernel function space consisting of $G(\mathbf{x}, \mathbf{y}) : \Omega \times \Omega \mapsto \mathbb{R}$ such that
\begin{equation}\label{prop} 
\sup_{\mathbf{x} \in \Omega} \int_{\Omega} |G(\mathbf{x},\mathbf{y})| \mathrm{d} \mathbf{y} < \infty \quad \mbox{ and } \quad \sup_{\mathbf{y} \in \Omega} \int_{\Omega} |G(\mathbf{x},\mathbf{y})| \mathrm{d}\mathbf{x} < \infty
\end{equation} 
so that it defines a bounded integral operator (see, e.g. [\cite{follandbook}, Theorem 6.18]). For a function $f(\mathbf{x})$, where $\mathbf{x}, \mathbf{y} \in \Omega \subset \mathbb{R}^d$, we denote the corresponding integral transform as: 
\begin{equation}
\label{eq:star}
G \star f := \int_\Omega G(\mathbf{x},\mathbf{y}) f(\mathbf{y}) \mathrm{d} \mathbf{y}.
\end{equation}
Our main objective is to learn the kernel function $G$ from a given dataset $\{f_j, u_j\}^{N}_{j=1}$. In practice, dataset are often obtained by sensors or sampled from continuous functions. Therefore, it is natural to consider $\{f_j, u_j\}^{N}_{j=1}$ as functions that are piecewise linearly interpolated from the discrete counterpart $\{\tend{f_j}, \tend{u_j}\}^{N}_{j=1}$, where $\tend{f_j} \in \mathbb{R}^{m_f}$ and $\tend{u_j} \in \mathbb{R}^{m_u}$ are vectors. We assume that two function samples $\tend{f_i}$ and $\tend{f_j}$ are linearly independent in $\mathbb{R}^{m_f}$, then $f_i$ and $f_j$ are also linearly independent as functions in $\mathcal{L}_1(\mathbb{D})$. For simplicity, we also assume that $f_j$ has a unit norm, and denote the set $\mathcal{K}$ by the following:
\begin{equation}
\label{eq:data_space}
\mathcal{K} = \left \{ f \in \mathcal{L}_1(\mathbb{D}) : f = \sum_{j=1}^N \alpha_j f_j \quad \mbox{ with } \quad \sum_{j=1}^N |\alpha_j| \leq 1 \quad \mbox{ and } \quad \|f_j\|_{L^2(\Omega)} = 1 \right \}. 
\end{equation}
Based on \eqref{eq:star}, we define a new inner product as follows:
\begin{equation}\label{eq:kernel_inner_prod}
\langle G_1, G_2 \rangle_H := \frac{1}{N} \sum_{j=1}^N (G_1 \star f_j , G_2 \star f_j)_{L^2(\Omega)}. 
\end{equation} It is evident that unless $N$ is sufficiently large, $\langle \cdot, \cdot \rangle_H$ does not make a definite inner product, but rather a semi-inner product. Namely, the induced norm, denoted by $\|\cdot\|_H$ is only a semi-norm. More precisely, we have:   
\begin{equation}
0 = \|G\|_H^2 = \langle G, G \rangle_H = \frac{1}{N} \sum_{j=1}^N (G \star f_j, G\star f_j)_{L^2(\Omega)} \geq 0,
\end{equation}
but this does not necessarily imply that
$G = 0$. This can be clearly observed in the extreme case for $N = 1$ and $f_1 = 1$, if $G(\mathbf{x},\mathbf{y}) \neq 0$ can still satisfy $\int_\Omega G(\mathbf{x},\mathbf{y}) \mathrm{d} \mathbf{y} = 0$. In fact, the linear operator learning is inherently an approximation for the kernel function using a limited number of data points. Therefore, the kernel must be expressed in some approximate form, denoted as $\hat{G}$, to make it trainable. Similarly, we may treat $\hat{G}$ as a piecewise linear interpolated from the discrete counterpart $\tenq{G} \in \mathbb{R}^{m_u \times m_f}$ of $G$. Specifically, we have 
\begin{equation}
\mathcal{G} := \left \{ \hat{G} \in \mathcal{L}_1(\mathbb{D}) : \exists \tenq{G} \in \mathbb{R}^{m_u \times m_f} \mbox{ such that } \tenq{G} \tend{f} = \begin{pmatrix} 
(G \star f)(\mathbf{x}_1)  \\ 
\vdots \\
(G \star f)(\mathbf{x}_{m_u}) 
\end{pmatrix} \right \}. 
\end{equation}


We give an example of evaluation of a one-dimensional integral transform for illustration. Assume that $y_k = y_0 + kh$ are equidistant grid points on $[-1,1]$, where $k$ is the index and $h$ is the mesh size. The discrete approximation of a function on this grid is denoted by the discrete vector $\tend{f} = [f^h_0, \cdots, f^h_k, \cdots, f^h_{m_f}]$, and the piecewise constant approximation denotes as $f(y) = f^h_k, y \in [y_k - h/2, y_k + h/2]$. The kernel integral can be calculated as,

\begin{equation}
\label{eq:log_int}
\begin{split}
u(x) &= (G \star f)(x) = \int_{-1}^{1} G(x,y) f(y) \mathrm{d}y \\
&= \sum_k \left ( \int_{y_k-h/2}^{y_k+h/2} G(x,y_k) f(y) \mathrm{d}y \right ) \\
&= \sum_k \left ( \int_{y_k-h/2}^{y_k+h/2} G(x,y_k) \mathrm{d}y \right ) f^h_k = \sum_k \hat{G}(x,y_k) f^h_k \\
\end{split}
\end{equation}
Given $G(x,y) = \ln|x-y|$, the corresponding $\hat{G}(x,y_k)$ can be written analytically\cite{mlmi-hernandez2005acoustics},
\begin{equation}
\label{eq:coef-case}
\hat{G}(x,y_k) = \begin{cases}
\left ( x-y_k+\frac{h}{2} \right )\ln \left ( x-y_k+\frac{h}{2} \right ) - \left ( x-y_k-\frac{h}{2} \right )\ln \left ( x-y_k-\frac{h}{2} \right ) - h, & x > y_k, \\
h\ln \left ( \frac{h}{2} \right )-h, & x = y_k, \\
\left ( y_j-x+\frac{h}{2} \right )\ln \left ( y_k-x+\frac{h}{2} \right ) - \left ( y_j-x-\frac{h}{2} \right )\ln \left ( y_k-x-\frac{h}{2} \right ) - h, & x < y_k
  \end{cases}
\end{equation}
Figure (\ref{fig:bounded_approx}) shows the difference between $G$ and $\hat{G}$. The exact kernel $G(x,y) = \ln|x-y|$ is an unbounded function with singularities at $x=y$, while $\hat{G}$ approximates $G$ and removes the singularity. To achieve a better approximation, we can choose a finer grid with smaller $h$.

\begin{figure}[h]
\centering
\includegraphics[scale=.7]{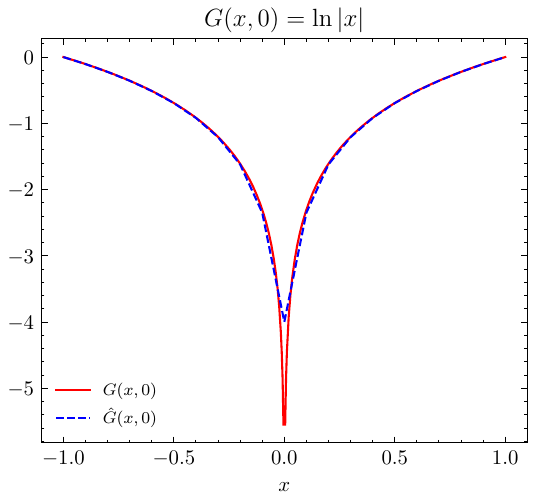}
\caption{exact kernel $G$ and its approximant $\hat{G}$ with $h=0.1$}
\label{fig:bounded_approx}
\end{figure}

Let $G_n$ denote the $n$-neuron approximation of $G$. Then, we have the following inequality: 
\begin{equation}
\label{eq:ieq}
\|G - G_n\|_H \leq \|G - \hat{G}\|_H + \|\hat{G} - G_n\|_H.
\end{equation}
(\ref{eq:ieq}) shows that the error in linear operator learning with shallow neural network consists of two components. The first is the modeling error, $\|G - \hat{G}\|_H$, which represents the difference between the true kernel $G$ and $\hat{G}$. The second component is the approximation error, $\|\hat{G} - G_n\|_H$, which measures the discrepancy between the $\hat{G}$ and $G_n$. In operator learning setting, it is typical to ignore the modeling error and to focus on the second part of the optimization. However, it is important to ensure that $\hat{G}$ is constructed in such a way that it serves as a good approximation of $G$, while also being sufficiently trainable with the limited available data. For simplicity, from now on, we directly use $G$ as $\hat{G}$ in the following derivation and discussion, under the assumption that the modeling error is neglectable.

Before we move to the orthogonal greedy algorithm(OGA), we need to discuss the null space for the norm defined in 
\begin{equation}
\label{eq:null_space1}
\mathcal{N} = \{ G \in \mathcal{G} : G \star f = 0,~ \forall f \in \mathcal{K} \}.
\end{equation}
We shall see that the space $\mathcal{G}$ as a subspace in $L^2(\Omega\times \Omega)$ and consider the orthogonal complement of $\mathcal{N}^\perp$. The dimension of $\mathcal{N}^\perp$is expected to be quite small in case that the dimension of $\mathcal{K}$ is small, i.e., the data are more or less linearly dependent. We observe that such an approximate expression can still restrict the case that the operator learning will give a unique $G$ and it is crucially dependent on the available data. More precisely, if the data set $\{f_j, u_j\}^{N}_{j=1}$ does not contain sufficiently large linearly independent data sets, then the optimization process does not lead to a unique minimizer. It may give a rather different minimizer that fits the data well as the norm $\|\cdot\|_H$ is only seminorm. Thus, one has to establish the OGA approximation property in the semi-norm, which is a new contribution in this paper. Given $G \in \mathcal{G}$ and $f \in \mathcal{K}$ with a total of $m_f$ number of linear independent functions, we establish the following lemma:
\begin{lemma}
Assume that the data set $\{f_j\}^{N}_{j=1}$ contains at least $m_f$ linear independent functions, say $\{f_{j_\ell}\}^{m_f}_{\ell=1}$ with $j_\ell \in \{1,\cdots,N\}$. Then, the space $\mathcal{K}$ makes a finite dimensional compact set and the following bilinear form on $\mathcal{G}$ defines an inner product:
\begin{equation}
\langle G_1, G_2 \rangle_H = \frac{1}{N} \sum_{j=1}^N (G_1 \star f_j, G_2 \star f_j)_{L^2(\Omega)}, \quad \forall G_1, G_2 \in \mathcal{G}.
\end{equation}
\end{lemma}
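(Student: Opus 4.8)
The plan is to dispatch the two assertions in order: the compactness and finite-dimensionality of $\mathcal{K}$, which is elementary linear algebra, and then the positive-definiteness of $\langle\cdot,\cdot\rangle_H$ on $\mathcal{G}$, which is the only place the linear-independence hypothesis is actually used.

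For $\mathcal{K}$, I would observe that $\mathcal{K}$ is precisely the image of the closed $\ell^1$-ball $B=\{(\alpha_1,\dots,\alpha_N):\sum_j|\alpha_j|\le 1\}\subset\mathbb{R}^N$ under the linear map $T:(\alpha_j)_j\mapsto\sum_{j=1}^N\alpha_j f_j$. Since $T$ has finite-dimensional domain it is continuous, $B$ is compact by Heine--Borel, hence $\mathcal{K}=T(B)$ is compact (and convex and symmetric). Moreover $\mathcal{K}\subset\mathrm{span}\{f_1,\dots,f_N\}$, a subspace of dimension at most $m_f$ because each $f_j$ is the piecewise-linear interpolant of a vector in $\mathbb{R}^{m_f}$; together with the hypothesis that $m_f$ of the $f_j$ are linearly independent, this span has dimension exactly $m_f$. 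Thus $\mathcal{K}$ is a finite-dimensional compact set.

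For the bilinear form, bilinearity and symmetry are inherited from $(\cdot,\cdot)_{L^2(\Omega)}$, and $\langle G,G\rangle_H=\frac1N\sum_{j=1}^N\|G\star f_j\|_{L^2(\Omega)}^2\ge 0$, so everything reduces to definiteness. Suppose $\langle G,G\rangle_H=0$ for some $G\in\mathcal{G}$; then $G\star f_j=0$ in $L^2(\Omega)$ for every $j$, in particular for the linearly independent samples $f_{j_1},\dots,f_{j_{m_f}}$. Since $G\in\mathcal{G}$ there is a matrix $\tenq{G}\in\mathbb{R}^{m_u\times m_f}$ with $\tenq{G}\,\tend{f}=\bigl((G\star f)(\mathbf{x}_1),\dots,(G\star f)(\mathbf{x}_{m_u})\bigr)^{\top}$, and evaluating at $f=f_{j_\ell}$ yields $\tenq{G}\,\tend{f_{j_\ell}}=0$ for $\ell=1,\dots,m_f$. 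The discrete counterparts $\tend{f_{j_1}},\dots,\tend{f_{j_{m_f}}}$ are linearly independent in $\mathbb{R}^{m_f}$, since a nontrivial linear relation among them would, after piecewise-linear interpolation, produce the same relation among the functions $f_{j_\ell}$; hence they form a basis of $\mathbb{R}^{m_f}$, $\tenq{G}$ annihilates a basis, and $\tenq{G}=0$. Because $G$ (identified with $\hat G$) is the piecewise-linear interpolant of $\tenq{G}$, we get $G=0$, which gives definiteness.

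I expect the last step to be the main obstacle: one must argue carefully that the hypothesis forces $\tenq{G}=0$ rather than merely $G\star f=0$ for all $f\in\mathcal{K}$, and then that $\tenq{G}=0$ really does force $G=0$ in $\mathcal{G}$ through the interpolation correspondence. This is exactly where the assumption that $\{f_j\}$ contains $m_f$ linearly independent functions is sharp: with fewer independent samples the discrete counterparts fail to span $\mathbb{R}^{m_f}$, $\tenq{G}$ need not vanish, and $\|\cdot\|_H$ stays a genuine seminorm with the nontrivial null space $\mathcal{N}$ discussed before the lemma. A minor technical point to record along the way is that each $G\star f_j$ is continuous — so that its nodal values are well defined — because $\hat G$ satisfies the boundedness condition \eqref{prop} and $f_j\in L^2(\Omega)\subset L^1(\Omega)$ on the bounded domain $\Omega$.
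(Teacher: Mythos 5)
Your proposal is correct and follows the same strategy as the paper: reduce to positivity, then show that $G\star f_{j_\ell}=0$ for the $m_f$ linearly independent samples forces $G=0$. The difference is that the paper's own proof asserts this last implication in a single sentence, whereas you actually prove it by passing to the matrix representation $\tenq{G}$, observing that $\tenq{G}\,\tend{f_{j_\ell}}=0$ on a basis of $\mathbb{R}^{m_f}$, concluding $\tenq{G}=0$, and then invoking the interpolation correspondence to get $\hat G=0$; you also fill in the compactness and dimension count for $\mathcal{K}$, which the paper states but does not argue. This is a genuinely useful expansion of the paper's terse argument rather than a different route: the obstacle you flag at the end — making the identification $G\leftrightarrow\hat G\leftrightarrow\tenq{G}$ rigorous — is precisely the step the paper glosses over, and your treatment is the right one given the paper's assumption that $\hat G$ is the piecewise-linear interpolant of $\tenq{G}$. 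The one point worth being a bit more careful about is the continuity remark: condition \eqref{prop} alone gives boundedness of the integral operator, not pointwise continuity of $G\star f_j$; the cleaner justification for the nodal values being well defined is that $\hat G$ is, by construction, a piecewise-linear (hence continuous) function, so $\hat G\star f_j$ is continuous because $f_j\in L^1(\Omega)$ on the bounded domain.
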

\begin{proof}
We only need to show the positivity of the inner product. For $G \in \mathcal{G}$, since we have at least $m_f$ linear independent sets $\{f_{j_\ell}\}_{\ell=1}^{m_f}$, we have that
\begin{equation}
G \star f_{j_\ell} = 0, \quad\forall \ell = 1,\cdots,m_f.  
\end{equation}
This necessarily implies that  
\begin{equation}
G = 0. 
\end{equation}
This completes the proof. 
\end{proof}
In passing to the next section, we shall establish that the norm $\|\cdot\|_H$ is bounded by the quotient norm given as follows: 
\begin{lemma}\label{main:lmb}
The following holds: 
\begin{equation} 
\label{eq:G_H_norm_bound}
\|G\|_H \lesssim \|G\|_{L^2(\Omega \times \Omega)/\mathcal{N}} = \inf_{\psi \in \mathcal{N}} \|G + \psi\|_{L^2(\Omega\times \Omega)}.  
\end{equation} 
\end{lemma}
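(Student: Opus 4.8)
The plan is to exploit the fact that $\|\cdot\|_H$, as defined by \eqref{eq:kernel_inner_prod}, annihilates every element of the null space $\mathcal{N}$, and then to bound the seminorm on $\mathcal{G}$ by the ambient $L^2(\Omega\times\Omega)$ norm up to a constant. First I would observe that for any $G \in \mathcal{G}$ and any $\psi \in \mathcal{N}$ we have $(G+\psi)\star f_j = G\star f_j$ for all $j$, since $\psi \star f = 0$ for all $f \in \mathcal{K}$ and each normalized $f_j$ lies in $\mathcal{K}$; consequently $\|G\|_H = \|G+\psi\|_H$. Thus it suffices to show the single-function bound $\|F\|_H \lesssim \|F\|_{L^2(\Omega\times\Omega)}$ for arbitrary $F \in \mathcal{G}$, and then take the infimum over $\psi \in \mathcal{N}$ on the right-hand side.

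The core estimate is a direct consequence of the boundedness of the integral operator together with Cauchy–Schwarz. For a fixed $j$, write $(F\star f_j)(\mathbf{x}) = \int_\Omega F(\mathbf{x},\mathbf{y}) f_j(\mathbf{y})\,\mathrm{d}\mathbf{y}$; by Cauchy–Schwarz in $\mathbf{y}$ and then integrating in $\mathbf{x}$,
\begin{equation}
\|F\star f_j\|_{L^2(\Omega)}^2 \le \int_\Omega \left( \int_\Omega |F(\mathbf{x},\mathbf{y})|^2\,\mathrm{d}\mathbf{y} \right)\|f_j\|_{L^2(\Omega)}^2\,\mathrm{d}\mathbf{x} = \|F\|_{L^2(\Omega\times\Omega)}^2\,\|f_j\|_{L^2(\Omega)}^2.
\end{equation}
Since each $f_j$ has unit norm, summing over $j$ and dividing by $N$ gives $\|F\|_H^2 \le \|F\|_{L^2(\Omega\times\Omega)}^2$, i.e. the constant can be taken to be $1$ (more generally $\max_j \|f_j\|_{L^2(\Omega)}^2$ without the normalization assumption). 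Combining this with the first step, for every $\psi \in \mathcal{N}$,
\begin{equation}
\|G\|_H = \|G+\psi\|_H \le \|G+\psi\|_{L^2(\Omega\times\Omega)},
\end{equation}
and taking the infimum over $\psi \in \mathcal{N}$ yields \eqref{eq:G_H_norm_bound}.

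I do not expect a genuine obstacle here; the statement is essentially a bookkeeping lemma. The one point requiring a little care is making sure the quotient is well-defined in the stated form, namely that $\mathcal{N}$ is a closed subspace of $L^2(\Omega\times\Omega)$ so that the infimum defines a genuine quotient norm — this follows because $\mathcal{N}$ is the intersection of the kernels of the bounded linear functionals $G \mapsto (G\star f)(\mathbf{x})$ (and is in fact finite-codimensional, as noted in the discussion preceding the lemma via the compactness of $\mathcal{K}$). A secondary subtlety is that $\mathcal{G}$ itself need not contain $G+\psi$ for every $\psi\in\mathcal{N}$; but this is irrelevant since the right-hand side of \eqref{eq:G_H_norm_bound} is taken over the ambient space and the chain of inequalities only uses $G+\psi \in L^2(\Omega\times\Omega)$, which holds automatically.
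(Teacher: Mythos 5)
Your proposal is correct and follows essentially the same route as the paper: replace $G$ by $G+\psi$ using the fact that $\psi\star f_j = 0$ for $\psi\in\mathcal{N}$, apply Cauchy--Schwarz in $\mathbf{y}$ then integrate in $\mathbf{x}$, use the normalization $\|f_j\|_{L^2(\Omega)}=1$, and take the infimum over $\psi$. Your added remarks on closedness of $\mathcal{N}$ and well-posedness of the quotient norm are a helpful clarification not spelled out in the paper's proof.
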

\begin{proof}
By the definition of $\|G\|_H$ and for any $\psi \in \mathcal{N}$, we have that  
\begin{eqnarray}
\|G\|_H^2 &=& \frac{1}{N} \sum_{j=1}^N (G \star f_j, G\star f_j)_{L^2(\Omega)} = \frac{1}{N} \sum_{j=1}^N ((G + \psi) \star f_j, (G + \psi)\star f_j)_{L^2(\Omega)} \\
&=& \frac{1}{N} \sum_{j=1}^N \int_\Omega \Big( \int_\Omega (G + \psi)(\mathbf{x},\mathbf{y}) f_j (\mathbf{y}) \, \mathrm{d}\mathbf{y}  \Big)^2 \mathrm{d} \mathbf{x} \\
&\leq& \frac{1}{N} \sum_{j=1}^N \int_\Omega \int_\Omega (G + \psi)^2(\mathbf{x},\mathbf{y}) \mathrm{d}\mathbf{y} \int_\Omega f_j^2 (\mathbf{y}) \, \mathrm{d}\mathbf{y} \mathrm{d}\mathbf{x} = \frac{1}{N} \sum_{j=1}^N \int_\Omega \int_\Omega (G + \psi)^2(\mathbf{x},\mathbf{y}) d\mathbf{y} \mathrm{d}\mathbf{x} \\
&=& \|G + \psi\|_{L^2(\Omega\times\Omega)}^2, 
\end{eqnarray}
where the last equality is due to the assumption that the data is normalized, i.e., $f_j$'s are such that $\|f_j\|_{L^2(\Omega)} = 1$. 
This completes the proof. 
\end{proof}

\subsection{Orthogonal greedy algorithm for function approximation}

A common class of algorithms to approximate target function $G$ through the expansion (\ref{eq:expansion}) are greedy algorithms, such as the pure greedy algorithm(PGA)\cite{oga-zhang1993pga}, the relaxed greedy algorithm(RGA)\cite{oga-jones1992simple}\cite{oga-barron1993universal}\cite{oga-barron2008approximation}, the orthogonal greedy algorithm(OGA)\cite{oga-pati1993OMP}, and their weak counterparts\cite{woga-temlyakov2000weak}. In this section, we introduce some notations, then present OGA with randomized discrete dictionaries\cite{oga-park2024randomized} on function approximation and its extension to kernel estimation. 

Given a target function $G$ in a Hilbert space with a inner product $( \cdot, \cdot )_{L^2(\Omega)}$ and the function norm $\|\cdot\|_{L^2(\Omega)}$, we consider approximating $G_n$ in $\|\cdot\|_{L^2(\Omega)}$ norm as (\ref{eq:expansion}) with a shallow neural network dictionary $\mathbb{D}$ as defined in (\ref{eq:shallow_dict}). The orthogonal greedy algorithm(OGA) for function approximation can be summarized as combining the greedy algorithm with orthogonal projections, also known as orthogonal matching pursuit given as the Algorithm \ref{OGA}, where $P_n$ is the orthogonal projection onto subspace $H_n := {\rm span} \{g_1,\cdots,g_n\}$.
\begin{algorithm}[h]
\caption{Orthogonal greedy algorithm for function approximation}\label{OGA}
Given a dictionary $\mathbb{D}$ defined in (\ref{eq:shallow_dict}), $G_0 = 0$
\begin{algorithmic}
\FOR{$n = 1, 2, \cdots $}
\STATE{
Subproblem optimization
\begin{equation}\label{eq:oga_subproblem}
g_n = \argmax_{g \in \mathbb{D}} (G - G_{n-1}, g )_{L^2(\Omega)}
\end{equation}}
\STATE{
Orthogonal projection
\begin{equation}\label{eq:oga_projection}
G_n = P_n(G)
\end{equation}
}
\ENDFOR
\end{algorithmic}
\end{algorithm}



One way to solve the argmax subproblem(\ref{eq:oga_subproblem}) is to approximate the solution by exhaustively searching over a deterministic discrete dictionary $\mathbb{D}$\cite{oga-siegel2023greedy}, however, it is computationally prohibited on high-dimensional problems. \cite{oga-park2024randomized} introduce a randomized discrete dictionary at each iteration of OGA, significantly reducing the size of the dictionaries compared with the deterministic one. 
 
Before discretizing the dictionary $\mathbb{D}$, we first introduce the parameter space of shallow neural networks. For $\mathbf{w} = (\omega_1, \cdots, \omega_d) \in S^{d-1}$, we have $\mathbf{w} = \mathcal{S}(\phi)$ for some $\phi = (\phi_1, \cdots, \phi_{d-1}) \in [0, \pi]^{d-2} \times [0, 2\pi) \subset \mathbb{R}^{d-1}$, where hyperspherical coordinate map $\mathcal{S}$ is given by \cite{oga-siegel2023greedy, oga-park2024randomized}:
\begin{equation}\label{eq:hypersphere_map}
\begin{split}
\omega_i = (\mathcal{S}(\phi))_i =
    \left\{
            \begin {aligned}
                & \Biggl( \prod_{j=1}^{i-1} \sin \phi_j \Biggl) \cos \phi_i, & 1 \leq i \leq d-1,\\
                & \prod_{j=1}^{d-1} \sin \phi_j, & i=d.
            \end{aligned}
    \right.
\end{split}
\end{equation}
The parameter space of shallow neural network $R$ is defined as $\bigl([0, \pi]^{d-2} \times [0, 2\pi) \bigl) \times [c_1, c_2]$. A random discretization of the dictionary $\mathbb{D}$ with $N_R$ samples from the uniform distribution on $R$ is defined as
\begin{equation}\label{eq:rand_dict}
\mathbb{D}_{N_R} = \bigl\{ \pm \sigma_{k}(\mathcal{S}(\phi_i) \cdot \mathbf{x} + \beta_i) : (\phi_i, \beta_i) \sim \mathrm{Uniform}(R), 1 \leq i \leq N_R\}
\end{equation}
The orthogonal greedy algorithm with randomized discrete dictionaries for function approximation is given as the Algorithm \ref{OGA_rand}.

\begin{algorithm}[h]
\caption{Orthogonal greedy algorithm for function approximation with randomized discrete dictionaries}\label{OGA_rand}
Given $G_0 = 0$
\begin{algorithmic}
\FOR{$n = 1, 2, \cdots $}
\STATE{
Random dictionary construction 
\begin{equation}
\mathbb{D}_{N_R} = \bigl\{ \pm \sigma_{k}(\mathcal{S}(\phi_i) \cdot \mathbf{x} + \beta_i) : \phi_i, \beta_i \sim \mathrm{Uniform}(R), 1 \leq i \leq N_R\}    
\end{equation}}
\STATE{
Subproblem optimization
\begin{equation}
g_n = \argmax_{g \in \mathbb{D}_{N_R}} (G - G_{n-1}, g)_{L^2(\Omega)}
\end{equation}}
\STATE{
Orthogonal projection
\begin{equation}
G_n = P_n(G)
\end{equation}
}
\ENDFOR
\end{algorithmic}
\end{algorithm}

In \cite{oga-park2024randomized}, OGA with randomized discrete dictionaries is proven to be a realization of a weak orthogonal greedy algorithm(WOGA), which replaces (\ref{eq:oga_subproblem}) with a weaker condition: finding $g_n \in \mathbb{D}$ such that $\langle G - G_{n-1}, g_n \rangle_H \geq \gamma \max_{g \in \mathbb{D}} \langle G - G_{n-1}, g \rangle_H$ for a given parameter $\gamma \in (0,1]$. The convergence rate of WOGA is given by,
\begin{equation}
\label{eq:woga_approx_rate}
\begin{split}
\inf_{G_n \in \Sigma_{n,M}(\mathbb{D})} \| G - G_n \|_{H} \leq \frac{C}{\gamma} \|G\|_{\mathcal{L}_1(\mathbb{D})} n^{-\frac{1}{2} - \frac{2k+1}{2d}}.
\end{split}
\end{equation}
For details on the convergence analysis of WOGA, see \cite{woga-temlyakov2000weak, oga-siegel2024entropy, oga-park2024randomized}.

\subsection{Orthogonal greedy algorithm for kernel estimation}
In this section, we now discuss how to obtain the shallow neural network approximation of kernel from dataset $\{f_j, u_j\}^{N}_{j=1}$ by OGA. Following the setting of function approximation, we consider approximating a target kernel $G$ by $G_n$ in $\|\cdot\|_H$ norm as expansion (\ref{eq:expansion}) through shallow neural network dictionary. Algorithm \ref{OGA_rand_kernel} shows the details. 

\begin{algorithm}[h]
\caption{Orthogonal greedy algorithm for kernel estimation with randomized discrete dictionaries}\label{OGA_rand_kernel}
Given a dataset $\{f_j, u_j\}^{N}_{j=1}$, $G_0 = 0$
\begin{algorithmic}
\FOR{$n = 1, 2, \cdots $}
\STATE{
Random dictionary construction 
\begin{equation}
\label{eq:rand_kernel_dictionary}
\mathbb{D}_{N_R} = \bigl\{ \pm \sigma_{k}(\mathcal{S}(\phi_i) \cdot [\mathbf{x}, \mathbf{y}] + \beta_i) : \phi_i, \beta_i \sim \mathrm{Uniform}(R), 1 \leq i \leq N_R\}    
\end{equation}}
\STATE{
Subproblem optimization
\begin{equation}
\label{eq:kernel_maximum}
g_n = \argmax_{g \in \mathbb{D}_{N_R}} \langle G - G_{n-1}, g \rangle_H    
\end{equation}}
\STATE{
Orthogonal projection
\begin{equation}
G_n = P_n(G)
\end{equation}
}
\ENDFOR
\end{algorithmic}
\end{algorithm}

One need to notice that, the target kernel $G$ is a function with $2d$ inputs: a neuron basis is $g_i = \sigma(\mathbf{w}_i \cdot [\mathbf{x}, \mathbf{y}] + \beta_i) \in \mathbb{D}_{N_R}$ (\ref{eq:rand_kernel_dictionary}) depends on the function $G$, where $\mathbf{x}, \mathbf{y} \in \Omega \subset \mathbb{R}^d$ are two points, and $[\mathbf{x}, \mathbf{y}] \in \Omega \times \Omega \subset \mathbb{R}^{2d}$ denotes concatenated points. The other change is to replace the function inner product $( \cdot, \cdot )_{L_2(\Omega)}$ with the inner product defined in (\ref{eq:kernel_inner_prod}).

In subproblem optimization step, the target function $\langle G - G_{n-1}, g \rangle_H$ can be written as, 
\begin{equation}
\begin{split}
\langle G - G_{n-1}, g \rangle_H &= \sum_{j=1}^{N} ((G - G_{n-1}) \star f_j ,  g \star f_j)_{L^2(\Omega)} \\
&= \sum_{j=1}^{N} ( G \star f_j - G_{n-1} \star f_j ,  g \star f_j)_{L^2(\Omega)} \\
&= \sum_{j=1}^{N} ( u_j - G_{n-1} \star f_j ,  g \star f_j)_{L^2(\Omega)} \\
&= \sum_{j=1}^{N} \int_{\Omega} (u_j(\mathbf{x}) - G_{n-1} \star f_j) (g \star f_j)  \mathrm{d} \mathbf{x}.
\end{split}
\end{equation}
The maximization problem (\ref{eq:kernel_maximum}) can be solved by an exhaustive search over $\mathbb{D}_{N_R}$ (\ref{eq:rand_kernel_dictionary}) or combined with other optimization methods, such as Newton's method.

After calculating $g_n$, we shall consider $G_n$ by $G_n = P_n(G)$. For $\forall v \in \mathrm{span} \{g_1, g_2, \cdots g_n \}$, we have $\langle P_n(G), v \rangle_H = \langle G, v \rangle_H$. By plug-in $G_n = \sum_{i=1}^{n} \alpha_i g_i$ and let $v = g_1$, we have
\begin{equation}
\label{eq:g1}
\begin{split}
\langle P_n(G), g_1 \rangle_H &= \langle G, g_1 \rangle_H \\
\sum_{j=1}^{N} \left (\sum_{i=1}^{n} \alpha_i g_i \star f_j ,  g_1 \star f_j \right )_{L^2(\Omega)} &= \sum_{j=1}^{N} \left (G \star f_j ,  g_1 \star f_j \right )_{L^2(\Omega)} \\ 
\sum_{j=1}^{N} \left (\sum_{i=1}^{n} \alpha_i g_i \star f_j ,  g_1 \star f_j \right )_{L^2(\Omega)} &= \sum_{j=1}^{N} (u_j ,  g_1 \star f_j)_{L^2(\Omega)} \\ 
    \end{split}
\end{equation}
For the sake of notational simplicity, we denote $\Hat{u}_{j,i} := g_i \star f_j$, then (\ref{eq:g1}) becomes
\begin{equation}
\label{eq:e22}
\begin{split}        
\sum_{j=1}^{N} \left (\sum_{i=1}^{n} \alpha_i \Hat{u}_{j,i} ,  \Hat{u}_{j,1} \right )_{L^2(\Omega)} &= \sum_{j=1}^{N} (u_j, \Hat{u}_{j,1})_{L^2(\Omega)}. \\ 
    \end{split}
\end{equation}
(\ref{eq:e22}) can be rewritten as a linear combination as follows, 
\begin{equation}
    \begin{split}        
        \sum_{j=1}^{N} (\alpha_1 \Hat{u}_{j,1} ,  \Hat{u}_{j,1})_{L^2(\Omega)} + \sum_{j=1}^{N} (\alpha_2 \Hat{u}_{j,2} ,  \Hat{u}_{j,1})_{L^2(\Omega)} + \cdots + \sum_{j=1}^{N} (\alpha_n \Hat{u}_{j,n} ,  \Hat{u}_{j,1})_{L^2(\Omega)} &= \sum_{j=1}^{N} (u_j ,  \Hat{u}_{j,1})_{L^2(\Omega)} \\ 
        \alpha_1 \sum_{j=1}^{N} (\Hat{u}_{j,1} ,  \Hat{u}_{j,1})_{L^2(\Omega)} + \alpha_2 \sum_{j=1}^{N} ( \Hat{u}_{j,2} ,  \Hat{u}_{j,1})_{L^2(\Omega)} + \cdots + \alpha_n \sum_{j=1}^{N} ( \Hat{u}_{j,n} ,  \Hat{u}_{j,1})_{L^2(\Omega)} &= \sum_{j=1}^{N} (u_j ,  \Hat{u}_{j,1})_{L^2(\Omega)} \\ 
    \end{split}
\end{equation}
After rewriting $\langle P_n(G), g_2 \rangle_H = \langle G, g_2 \rangle_H, \cdots, 
        \langle P_n(G), g_n \rangle_H = \langle G, g_n \rangle_H$ as above, we obtain the following linear system, 
\begin{equation}
    \label{eq:linear_system}
    \begin{bmatrix}
    \sum_{j=1}^{N} (\Hat{u}_{j,1} ,  \Hat{u}_{j,1})_{L^2(\Omega)} & \sum_{j=1}^{N} ( \Hat{u}_{j,2} ,  \Hat{u}_{j,1})_{L^2(\Omega)} & \cdots & \sum_{j=1}^{N} ( \Hat{u}_{j,n} ,  \Hat{u}_{j,1})_{L^2(\Omega)} \\
    \sum_{j=1}^{N} (\Hat{u}_{j,1} ,  \Hat{u}_{j,2})_{L^2(\Omega)} & \sum_{j=1}^{N} ( \Hat{u}_{j,2} ,  \Hat{u}_{j,2})_{L^2(\Omega)} & \cdots & \sum_{j=1}^{N} ( \Hat{u}_{j,n} ,  \Hat{u}_{j,2})_{L^2(\Omega)} \\
    \vdots & \vdots & \ddots & \vdots \\
    \sum_{j=1}^{N} (\Hat{u}_{j,1} ,  \Hat{u}_{j,n})_{L^2(\Omega)} & \sum_{j=1}^{N} ( \Hat{u}_{j,2} ,  \Hat{u}_{j,n})_{L^2(\Omega)} & \cdots & \sum_{j=1}^{N} ( \Hat{u}_{j,n} ,  \Hat{u}_{j,n})_{L^2(\Omega)} \\
    \end{bmatrix}
    \begin{bmatrix}
    \alpha_1 \\
    \alpha_2 \\
    \vdots \\
    \alpha_n
    \end{bmatrix} = 
    \begin{bmatrix}
    \sum_{j=1}^{N} (u_{j} ,  \Hat{u}_{j,1})_{L^2(\Omega)} \\
    \sum_{j=1}^{N} (u_{j} ,  \Hat{u}_{j,2})_{L^2(\Omega)} \\
    \vdots \\
    \sum_{j=1}^{N} (u_{j} ,  \Hat{u}_{j,n})_{L^2(\Omega)} \\
    \end{bmatrix}.
\end{equation}
The linear combination coefficients $\{\alpha_i\}_{i=1}^n$ can be calculated by solving linear system (\ref{eq:linear_system}).

\subsection{Approximation rate analysis of OGA for kernel estimation}
We shall now discuss the approximation rate for the weak orthogonal greedy algorithm (\ref{eq:woga_approx_rate}). We first introduce the notion of metric entropy introduced by Kolmogorov \cite{Tikhomirov1993}, the number $\varepsilon(A)$ of a set
$A \subset H$ are defined by
\begin{equation}
\varepsilon_n(A)_H = \inf \left \{ \varepsilon > 0 : A \mbox{ is covered by } 2^n \mbox{ balls of radius } \varepsilon \right \}.
\end{equation}
We now introduce an important but simple lemma, established in elsewhere \cite{9698049,li2023entropy}.  
to obtain the main result. 
\begin{lemma}
Given $f, h \in \mathcal{L}^1(\mathbb{D})$, we have that 
\begin{equation}
\langle f, h\rangle_H \leq \|f\|_{\mathcal{L}^1(\mathbb{D})} \sup_{g \in \mathbb{D}} \langle g,h\rangle_H.
\end{equation}
\end{lemma}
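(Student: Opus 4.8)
The plan is to reduce the claim to the defining property of the variation norm $\|\cdot\|_{\mathcal{L}^1(\mathbb{D})}$, exactly as one does for the ordinary $L^2$ inner product in the standard OGA analysis, and simply observe that nothing in that argument uses positive-definiteness of the inner product — only bilinearity and the supremum characterization. Concretely, I would first fix $\varepsilon > 0$ and, using the definition \eqref{eq:gauge_norm} of the gauge norm, choose a representation $f = \sum_i c_i g_i$ with $g_i \in \mathbb{D}$ and $\sum_i |c_i| \le \|f\|_{\mathcal{L}^1(\mathbb{D})} + \varepsilon$. (If $f \notin \mathcal{L}^1(\mathbb{D})$ the right-hand side is $+\infty$ and there is nothing to prove, so assume $\|f\|_{\mathcal{L}^1(\mathbb{D})} < \infty$.) One subtlety worth a line: the sum may be countably infinite, so I would either take a finite truncation that is within $\varepsilon$ of $f$ in $\|\cdot\|_H$ (using continuity of $\langle \cdot, h\rangle_H$, which follows from Lemma~\ref{main:lmb} together with boundedness of the integral operators via \eqref{prop}), or invoke the fact that the pairing extends continuously to $\mathrm{co}(\mathbb{D})$.

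Next I would expand using bilinearity of $\langle \cdot, \cdot\rangle_H$ — which holds for the semi-inner product \eqref{eq:kernel_inner_prod} just as for a genuine inner product — to get
\begin{equation}
\langle f, h \rangle_H = \left\langle \sum_i c_i g_i, h \right\rangle_H = \sum_i c_i \langle g_i, h \rangle_H.
\end{equation}
Then I would bound each term by $|c_i|\,|\langle g_i, h\rangle_H|$ and note that, since $\mathbb{D}$ is symmetric ($\mathbb{D} = -\mathbb{D}$ by the $\pm$ in \eqref{eq:shallow_nn}), $|\langle g_i, h\rangle_H| = \max(\langle g_i, h\rangle_H, \langle -g_i, h\rangle_H) \le \sup_{g \in \mathbb{D}} \langle g, h\rangle_H$. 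Therefore
\begin{equation}
\langle f, h \rangle_H \le \Big(\sum_i |c_i|\Big)\sup_{g\in\mathbb{D}} \langle g,h\rangle_H \le \big(\|f\|_{\mathcal{L}^1(\mathbb{D})} + \varepsilon\big)\sup_{g\in\mathbb{D}}\langle g,h\rangle_H.
\end{equation}
Letting $\varepsilon \to 0$ gives the claim. (Here I am implicitly using $\sup_{g\in\mathbb{D}}\langle g,h\rangle_H \ge 0$, which is immediate since $g$ and $-g$ both lie in $\mathbb{D}$, so the two factors never have a sign mismatch.)

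The main obstacle — really the only nonroutine point — is the infinite-sum/convergence issue: justifying that bilinearity passes to the (possibly) infinite expansion and that the supremum bound survives the limit, given that $\|\cdot\|_H$ is merely a semi-norm on the ambient space but (by the preceding Lemma) a genuine norm on $\mathcal{G}$. I would handle this by working with finite partial sums $f^{(M)} = \sum_{i\le M} c_i g_i$, applying the finite version of the inequality to each $f^{(M)}$, and passing to the limit using that $f^{(M)} \to f$ in $\mathcal{L}^1(\mathbb{D})$ (hence in $\|\cdot\|_H$, since $\|v\|_H \le \|v\|_{\mathcal{L}^1(\mathbb{D})}$ as $\|\mathbb{D}\| \le 1$ in $H$) and that $\langle \cdot, h\rangle_H$ is continuous with respect to $\|\cdot\|_H$ by Cauchy–Schwarz for the semi-inner product. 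Everything else is a one-line application of bilinearity and symmetry of the dictionary.
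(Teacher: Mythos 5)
The paper does not actually prove this lemma --- it is stated with the remark that it is ``established in elsewhere'' together with a citation, so there is no in-paper argument to compare against. Your proposal supplies the missing proof and it is correct: decompose $f$ near-optimally in the variation norm, expand $\langle f, h\rangle_H$ by bilinearity over the dictionary terms, bound each term using the symmetry $\mathbb{D}=-\mathbb{D}$ (which gives $\sup_{g\in\mathbb{D}}\langle g,h\rangle_H=\sup_{g\in\mathbb{D}}|\langle g,h\rangle_H|\ge 0$), and let $\varepsilon\to 0$. You also correctly isolate the one point that needs care in this paper's setting, namely that nothing in the argument uses positive-definiteness of $\langle\cdot,\cdot\rangle_H$: Cauchy--Schwarz holds for semi-inner products, and $\|v\|_H\le\|v\|_{\mathcal{L}_1(\mathbb{D})}$ (from $\|\mathbb{D}\|\le 1$) gives continuity of $\langle\cdot,h\rangle_H$ on $\mathcal{L}_1(\mathbb{D})$, so the finite truncations converge and the term-by-term expansion is legitimate. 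One small simplification: the detour through Lemma~\ref{main:lmb} and the boundedness condition \eqref{prop} in your opening paragraph is unnecessary --- the elementary bound $\|v\|_H\le\|v\|_{\mathcal{L}_1(\mathbb{D})}$ that you give in your closing paragraph already furnishes exactly the continuity you need, and is the cleaner route.
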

We shall present the instrumental result and important remark in this paper.
\begin{lemma}
Let $V_n = \pi^{\frac{n}{2}}/\Gamma \left (\frac{n}{2} +1 \right )$ be the volume of the $\ell^2$ unit ball in $\Reals{n}$ and $K$ be the compact subset in $H$. Then for any $v_1,\cdots,v_n \in K$, it holds that   
\begin{equation} 
\left ( \Pi_{k=1}^n \|v_k - P_{k-1}v_k\|_H \right )^{\frac{1}{n}} \leq (n! V_n)^{\frac{1}{n}}\varepsilon_n({\rm co}(K))_H,  
\end{equation} 
where $P_0 = 0$ and $P_k$ is the orthogonal projection onto $H_k = {\rm span} (v_1,\cdots, v_k)$. 
\end{lemma}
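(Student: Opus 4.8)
The plan is to reinterpret the product $\prod_{k=1}^{n}\|v_k-P_{k-1}v_k\|_H$ as the $n$-dimensional Lebesgue volume of a parallelepiped inside the finite-dimensional subspace $H_n:={\rm span}(v_1,\dots,v_n)$, and then to squeeze that volume between a convex body contained in ${\rm co}(K)$ (from below) and a metric-entropy covering bound (from above). If $v_1,\dots,v_n$ are linearly dependent, some factor $\|v_k-P_{k-1}v_k\|_H$ vanishes and the inequality is trivial, so I may assume $\dim H_n=n$ and fix the orthonormal basis $q_1,\dots,q_n$ of $H_n$ produced by Gram--Schmidt from $v_1,\dots,v_n$. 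With respect to this basis the matrix $R=[v_1\mid\cdots\mid v_n]$ is upper triangular with diagonal entries $\pm\|v_k-P_{k-1}v_k\|_H$, because $\|v_k-P_{k-1}v_k\|_H$ is exactly the distance from $v_k$ to $H_{k-1}$. Hence $\prod_{k=1}^{n}\|v_k-P_{k-1}v_k\|_H=|\det R|={\rm vol}_n(\Pi)$, where $\Pi=\{\sum_{k=1}^n t_kv_k:0\le t_k\le 1\}$ is the image of the unit cube $[0,1]^n$ under $R$.

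Next I would pass from $\Pi$ to the symmetric polytope $Q:={\rm co}(\{\pm v_1,\dots,\pm v_n\})=\{\sum_{k=1}^n t_kv_k:\sum_{k=1}^n|t_k|\le 1\}\subset H_n$, which is the image under $R$ of the unit $\ell^1$-ball of $\mathbb{R}^n$ (whose volume is $2^n/n!$); therefore ${\rm vol}_n(Q)=\frac{2^n}{n!}|\det R|=\frac{2^n}{n!}\prod_{k=1}^n\|v_k-P_{k-1}v_k\|_H$. In the setting of interest, where $K$ is the symmetric shallow-network dictionary, each $\pm v_k$ lies in $K\subset{\rm co}(K)$, so by convexity $Q\subset{\rm co}(K)$; and since $Q\subset H_n$ while $P_n$ is the orthogonal projection onto $H_n$, in fact $Q\subset P_n({\rm co}(K))$. (If one only knows $0\in{\rm co}(K)$, one may instead use the simplex ${\rm co}(\{0,v_1,\dots,v_n\})$ of volume $\frac1{n!}|\det R|$, at the price of a harmless absolute constant; the symmetric choice is what produces the stated constant.)

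Finally I would bring in the metric entropy. Given any $\varepsilon>\varepsilon_n({\rm co}(K))_H$, cover ${\rm co}(K)$ by $2^n$ balls of radius $\varepsilon$; since $\|P_n\|\le 1$, each such ball is carried by $P_n$ into a ball of radius $\le\varepsilon$ lying in $H_n\cong\mathbb{R}^n$, so $P_n({\rm co}(K))$ is covered by $2^n$ balls of radius $\varepsilon$ in $\mathbb{R}^n$ and thus ${\rm vol}_n(P_n({\rm co}(K)))\le 2^n V_n\varepsilon^n$. Combining this with the lower bound ${\rm vol}_n(Q)\le{\rm vol}_n(P_n({\rm co}(K)))$ gives
\[
\frac{2^n}{n!}\prod_{k=1}^n\|v_k-P_{k-1}v_k\|_H\ \le\ 2^n V_n\varepsilon^n ;
\]
cancelling $2^n$, multiplying by $n!$, taking $n$-th roots, and letting $\varepsilon\downarrow\varepsilon_n({\rm co}(K))_H$ yields the claim. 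Compactness of $K$ is used only to guarantee that ${\rm co}(K)$ is bounded (so ${\rm vol}_n(P_n({\rm co}(K)))<\infty$) and that $\varepsilon_n({\rm co}(K))_H<\infty$.

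I expect the main obstacle to be the bookkeeping in the two volume identities — recognizing ${\rm vol}_n(\Pi)$ as the Gram--Schmidt product $\prod_k\|v_k-P_{k-1}v_k\|_H$ and computing ${\rm vol}_n(Q)=\frac{2^n}{n!}|\det R|$ — together with the structural point that the comparison body must genuinely lie inside ${\rm co}(K)$, which is exactly where symmetry of the dictionary (or at least $0\in{\rm co}(K)$) is invoked, and the elementary but essential observation that an orthogonal projection cannot enlarge a ball, so that the covering number of $P_n({\rm co}(K))$ is inherited from that of ${\rm co}(K)$.
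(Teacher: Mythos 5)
The paper itself does not prove this lemma; it simply cites external references (\cite{9698049,li2023entropy}), so there is no internal proof to compare against. Your argument is the standard volume-comparison proof found in those sources, and it is correct. The key chain of reasoning — write $\prod_{k=1}^n\|v_k-P_{k-1}v_k\|_H$ as $|\det R|$ via the upper-triangular Gram--Schmidt matrix, realize $\tfrac{2^n}{n!}|\det R|$ as the Lebesgue volume of the cross-polytope $Q={\rm co}\{\pm v_1,\dots,\pm v_n\}\subset H_n$, observe $Q\subseteq P_n({\rm co}(K))$, and bound ${\rm vol}_n(P_n({\rm co}(K)))\le 2^nV_n\varepsilon^n$ using that orthogonal projections are $1$-Lipschitz so a cover of ${\rm co}(K)$ projects to a cover of $P_n({\rm co}(K))$ — is exactly the intended argument, and the constants cancel to give $(n!V_n)^{1/n}\varepsilon_n({\rm co}(K))_H$ with no slack.

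You also correctly flag the one hypothesis that the lemma's statement leaves implicit: the inclusion $Q\subseteq{\rm co}(K)$ needs $\pm v_k\in{\rm co}(K)$ for every $k$, i.e., that $K$ is symmetric (or at least that ${\rm co}(K)$ is symmetric about the origin). As written, the lemma says only that $K$ is a compact subset, and without symmetry the inequality is false: take $K=\{v\}$ a single unit vector and $n=1$; then the left side is $\|v\|_H=1$ but $\varepsilon_1({\rm co}(K))_H=0$. The paper's application uses $K=\mathbb{D}$, the symmetric $\pm\sigma_k$ dictionary, so the hypothesis is satisfied there, but it should appear in the lemma statement. Your fallback via the simplex ${\rm co}\{0,v_1,\dots,v_n\}$ under the weaker assumption $0\in{\rm co}(K)$ would only yield the bound with an extra factor of $2$, and in fact (as the $K=\{0,v\}$ example shows) even $0\in{\rm co}(K)$ is not enough to recover the stated constant — the genuine symmetry is what is needed. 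So your proof is correct as an argument for the symmetric case, which is the case the paper actually uses.
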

\begin{remark}
For the case when $\langle \cdot,\cdot\rangle_H$ is only semi-inner product. We interpret the orthogonal projection as the projection to $\mathcal{N}^\perp$ and apply the usual orthogonal projection. This orthogonal projection will be denoted by $\perp_H$ and used in Theorem \ref{main:thm}.   
\end{remark}
We are in a position to prove our main result: 
\begin{theorem}\label{main:thm} 
The weak OGA applied to the minimization problem: for $G \in \mathcal{G}$, the optimization problem can be achieved by weak OGA and the rate is given as follows: 
\begin{equation}
\inf_{G_n \in \Sigma_{n,M}(\mathbb{D})} \|G - G_n\|_H \lesssim \frac{\|G\|_{\mathcal{L}_1(\mathbb{D})}}{\gamma} \epsilon_n. 
\end{equation}
where $\| \cdot \|_H$ is either semi-norm or norm. 
\end{theorem}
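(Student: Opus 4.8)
The plan is to reduce the semi-norm case to the standard convergence theory for the weak orthogonal greedy algorithm by working in the quotient space $L^2(\Omega\times\Omega)/\mathcal{N}$, equivalently on $\mathcal{N}^\perp$. First I would observe, using Lemma~1, that once the data set contains $m_f$ linearly independent functions the bilinear form $\langle\cdot,\cdot\rangle_H$ is a genuine inner product on $\mathcal{G}$, so in that regime the claim is exactly the classical WOGA rate \eqref{eq:woga_approx_rate} with $H=L^2$ replaced by the Hilbert space $(\mathcal{G},\langle\cdot,\cdot\rangle_H)$; the ingredients are the entropy estimate of Lemma~3, the elementary inequality of Lemma~2, and the Temlyakov-type bookkeeping from \cite{woga-temlyakov2000weak, oga-siegel2024entropy, oga-park2024randomized}. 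The entropy number $\epsilon_n$ appearing in the statement is $(n! V_n)^{1/n}\varepsilon_n(\mathrm{co}(\mathbb{D}))_H$, and by Lemma~\ref{main:lmb} together with the sharp $L^2$ entropy bound for $\mathrm{ReLU}^k$ dictionaries from \cite{ogatheory-siegel2024sharp}, this is controlled by $n^{-\frac12-\frac{2k+1}{2d}}$, recovering \eqref{eq:woga_approx_rate}.

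For the genuinely semi-norm case I would use the Remark: reinterpret every orthogonal projection $P_n$ in Algorithm~\ref{OGA_rand_kernel} as the $\langle\cdot,\cdot\rangle_H$-orthogonal projection onto $\mathcal{N}^\perp\cap\mathrm{span}\{g_1,\dots,g_n\}$, denoted $\perp_H$. The key point is that the greedy dynamics only ever "see" $G$ through the pairings $\langle G-G_{n-1},g\rangle_H$, which depend solely on $G\star f_j$, hence only on the class $[G]\in L^2(\Omega\times\Omega)/\mathcal{N}$; thus running WOGA on $G$ is identical to running it on the image $\bar G$ of $G$ in $\mathcal{N}^\perp$, where $\langle\cdot,\cdot\rangle_H$ is a bona fide inner product (equivalent, by Lemma~\ref{main:lmb} and the finite dimensionality from Lemma~1, to the quotient norm). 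Then the already-established inner-product version applies verbatim to $\bar G$, and since $\|G-G_n\|_H=\|\bar G-G_n\|_H$ and $\|\bar G\|_{\mathcal{L}_1(\mathbb{D})}\le\|G\|_{\mathcal{L}_1(\mathbb{D})}$ (the quotient map is a contraction and $G_n$ built from dictionary atoms has norm unaffected), the stated bound follows.

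The steps in order: (i) record that the greedy iterates, the residual inner products, and the projected approximants $G_n$ are all functions of $[G]$ only, so the algorithm descends to $\mathcal{N}^\perp$; (ii) invoke Lemma~1 to get that $\langle\cdot,\cdot\rangle_H$ restricted to $\mathcal{N}^\perp$ is an inner product and, by Lemma~\ref{main:lmb} plus compactness of $\mathcal{K}$, norm-equivalent to the quotient $L^2$ norm; (iii) apply Lemmas~2 and~3 together with the WOGA convergence argument of \cite{oga-park2024randomized, woga-temlyakov2000weak} in this Hilbert space to obtain $\inf_{G_n}\|\bar G-G_n\|_H\lesssim \gamma^{-1}\|\bar G\|_{\mathcal{L}_1(\mathbb{D})}\,\epsilon_n$; (iv) transfer back via $\|G-G_n\|_H=\|\bar G-G_n\|_H$ and $\|\bar G\|_{\mathcal{L}_1(\mathbb{D})}\le\|G\|_{\mathcal{L}_1(\mathbb{D})}$; (v) identify $\epsilon_n\lesssim n^{-1/2-(2k+1)/(2d)}$ using Lemma~\ref{main:lmb} and \cite{ogatheory-siegel2024sharp} to match \eqref{eq:woga_approx_rate}. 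The main obstacle I anticipate is step (i)–(ii): one must check carefully that the $\perp_H$-projection is well defined and that the orthogonal-projection step of the algorithm does not leak information outside $\mathcal{N}^\perp$ in a way that breaks the entropy estimate of Lemma~3 — i.e.\ that $\{g_i\}$ may be replaced by their projections onto $\mathcal{N}^\perp$ without changing $\mathrm{span}$-closures relevant to $P_n$, and that $\mathrm{co}(\mathbb{D})$'s entropy in the semi-norm is what Lemma~3 requires; everything after that is the standard greedy estimate applied in a finite-dimensional (or separable Hilbert) setting.
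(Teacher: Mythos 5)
Your high-level idea---descend to the quotient $L^2(\Omega\times\Omega)/\mathcal{N}$ (equivalently to $\mathcal{N}^\perp$), where $\langle\cdot,\cdot\rangle_H$ is a genuine inner product, and apply the classical WOGA theory there---is a legitimate alternative route, but it is \emph{not} what the paper does, and it leaves an unresolved step exactly where you flag it. The paper instead runs the standard Temlyakov/Park--Siegel recursion \emph{directly} with the semi-inner product: set $r_n=G-G_n$, use $r_n\perp_H H_n$ and optimize $\alpha$ in $\|r_{n-1}-\alpha(G_n-P_{n-1}G_n)\|_H^2$, then apply Lemma 2 together with the weak-greedy selection to get $\|r_{n-1}\|_H^2\le \gamma^{-1}\|G\|_{\mathcal{L}_1(\mathbb{D})}\,|\langle r_{n-1},G_n-P_{n-1}G_n\rangle_H|$, and close the recursion $a_n\le a_{n-1}(1-b_na_{n-1})$ exactly as in the definite case, feeding Lemma 3 and the argument of \cite{li2023entropy} to obtain $a_n\lesssim\epsilon_n^2$. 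The only adjustment needed in the semi-norm setting is interpreting $P_n$ as the $\perp_H$-projection onto $\mathcal{N}^\perp\cap H_n$, exactly as the Remark says; every algebraic step---bilinearity, Cauchy--Schwarz, Pythagoras for $\perp_H$---holds verbatim for a semi-inner product, so no passage to the quotient and no contraction claim on $\mathcal{L}_1(\mathbb{D})$ are required.

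The gap in your version is the step ``$\|\bar G\|_{\mathcal{L}_1(\mathbb{D})}\le\|G\|_{\mathcal{L}_1(\mathbb{D})}$ because the quotient map is a contraction.'' That contraction is in $\|\cdot\|_H$ (indeed in the quotient $L^2$ norm, Lemma~\ref{main:lmb}), but not automatically in the variation norm: the orthogonal projection onto $\mathcal{N}^\perp$ does not map $\mathbb{D}$ into itself, so a representation $G=\sum c_ig_i$ with $g_i\in\mathbb{D}$ does not yield a representation of $\bar G$ in terms of $\mathbb{D}$. You can repair this by working with the pushed-forward dictionary $\bar{\mathbb{D}}=\{[g]:g\in\mathbb{D}\}$ on the quotient, noting $\|[G]\|_{\mathcal{L}_1(\bar{\mathbb{D}})}\le\|G\|_{\mathcal{L}_1(\mathbb{D})}$ and that $\varepsilon_n(\mathrm{co}(\bar{\mathbb{D}}))_H\le\varepsilon_n(\mathrm{co}(\mathbb{D}))_{L^2(\Omega\times\Omega)}$ by Lemma~\ref{main:lmb}; but you must actually carry those two checks out, and you must also justify that the greedy atom $g_n$, which the algorithm selects in $\mathbb{D}$, plays the same role as $[g_n]\in\bar{\mathbb{D}}$ in the quotient version. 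None of this is automatic, and you yourself list it as ``the main obstacle.'' The paper's direct argument sidesteps all of it; if you prefer the quotient picture, you need to fill in those three points rather than gesture at them.
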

\begin{proof}
The proof closely follows the definite norm case \cite{oga-park2024randomized}. First, let $r_n = G - G_n$ for $n \geq 1$. Since $r_n \perp_{H} H_n$ and $G_n - P_{n-1}G_n \in H_n$, we have that
\begin{equation}
\|r_n\|_H^2 \leq \|r_{n-1} - \alpha (G_n - P_{n-1} G_n) \|_H^2, \quad \forall \alpha \in \Reals{}.
\end{equation}
With $\alpha = \frac{\langle r_{n-1}, G_n - P_{n-1}G_n\rangle_H}{\|G_n - P_{n-1} G_n\|_H^2}$, we get that 
\begin{equation}
\|r_n\|_H^2 \leq \|r_{n-1}\|_H^2 - \frac{\langle r_{n-1}, G_n - P_{n-1}G_n\rangle_H^2}{\|G_n - P_{n-1} G_n\|_H^2}. 
\end{equation}
On the other hand, we have that
\begin{eqnarray*}
\|r_{n-1}\|_H^2 &=& \langle r_{n-1}, G \rangle_H \leq \|G\|_{\mathcal{L}^1(\mathbb{D})} \sup_{G \in \mathbb{D}} |\langle r_{n-1}, G\rangle_H | \\
&\leq& \frac{\|G\|_{\mathcal{L}^1(\mathbb{D})}}{\gamma} |\langle r_{n-1}, G_n \rangle_H | = \frac{\|G\|_{\mathcal{L}^1(\mathbb{D})}}{\gamma} |\langle r_{n-1}, G_n - P_{n-1} G_n \rangle_H |, 
\end{eqnarray*}
where the last equation is due to $r_{n-1} \perp_H H_{n-1}$. Thus, we get that 
\begin{eqnarray}
\frac{\gamma}{\|G\|_{\mathcal{L}^1(\mathbb{D})}} \|r_{n-1}\|_H^2  \leq |\langle r_{n-1}, G_n - P_{n-1} G_n \rangle_H | 
\end{eqnarray}
Thus, we have that
\begin{equation}
\|r_n\|_H^2 \leq \|r_{n-1}\|_H^2 - \frac{\gamma^2\|r_{n-1}\|_H^4}{\|G\|_{\mathcal{L}_1(\mathbb{D})}^2 \|G_n - P_{n-1} G_n\|_H^2} 
\end{equation}
Equivalently, we have that
\begin{equation}
\frac{\gamma^2}{\|G\|_{\mathcal{L}_1(\mathbb{D})}^2} \|r_n\|_H^2 \leq
\frac{\gamma^2}{\|G\|_{\mathcal{L}_1(\mathbb{D})}^2}\|r_{n-1}\|_H^2 \left ( 1  - \frac{\gamma^2\|r_{n-1}\|_H^2}{\|G\|_{\mathcal{L}_1(\mathbb{D})}^2 \|G_n - P_{n-1} G_n\|_H^2} \right ).  
\end{equation}
Thus, by setting 
\begin{equation}
a_n = \frac{\gamma^2}{\|G\|_{\mathcal{L}_1(\mathbb{D})}^2} \|r_n\|_H^2 \quad \mbox{ and } \quad b_n = \|G_n - P_{n-1} G_n\|_H^{-2},  
\end{equation}
we get the recursion relation that 
\begin{equation}
a_n \leq a_{n-1} (1 - b_n a_{n-1}), \quad \forall n \geq 1.
\end{equation}
The argument in \cite{li2023entropy} then gives 
\begin{equation}
a_n \lesssim \epsilon_n^2.  
\end{equation}
Thus, we arrive at the estimate that 
\begin{equation}
\frac{\gamma^2}{\|G\|_{\mathcal{L}_1(\mathbb{D})}^2} \|G - G_n\|_H^2 \lesssim \epsilon_n^2.
\end{equation} 
This completes the proof. 
\end{proof}
Finally, we have the following lemma:
\begin{lemma}\label{num:lem} 
The following estimate holds:
\begin{equation}
\inf_{G_n \in \Sigma_{n,M}(\mathbb{D})} \|G - G_n\|_H \leq \inf_{\psi \in \mathcal{N}} \|G - G_n + \psi\|_{L^2(\Omega \times \Omega)} \leq \inf_{\mu_n \in \Sigma_{n,M}(\mathbb{D})} \|G - \mu_n\|_{L^2(\Omega\times\Omega)}
\end{equation}
\end{lemma}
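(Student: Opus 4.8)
The plan is to read the displayed chain as two separate inequalities and dispatch each one; neither requires more than Lemma~\ref{main:lmb} plus the linear structure of $\mathcal{N}$. The middle quantity I will interpret as carrying an implicit $\inf_{G_n \in \Sigma_{n,M}(\mathbb{D})}$ (equivalently, it is the quotient-seminorm distance $\inf_{G_n}\inf_{\psi\in\mathcal{N}}\|G - G_n + \psi\|_{L^2(\Omega\times\Omega)}$), so that the statement reads
\[
\inf_{G_n}\|G - G_n\|_H \;\le\; \inf_{G_n}\inf_{\psi\in\mathcal{N}}\|G - G_n + \psi\|_{L^2(\Omega\times\Omega)} \;\le\; \inf_{\mu_n}\|G - \mu_n\|_{L^2(\Omega\times\Omega)},
\]
all infima taken over $\Sigma_{n,M}(\mathbb{D})$.

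For the left inequality I would fix an arbitrary $G_n \in \Sigma_{n,M}(\mathbb{D})$ and apply Lemma~\ref{main:lmb} to the residual $G - G_n$ in place of $G$: this is legitimate because, under the standing identification of $G$ with $\hat G$, both $G$ and $G_n$ sit in $\mathcal{L}_1(\mathbb{D}) \cap L^2(\Omega\times\Omega)$ and define bounded integral operators, so $G - G_n$ again belongs to the class $\mathcal{G}$ on which $\|\cdot\|_H$ and the quotient by $\mathcal{N}$ are defined. Since the proof of Lemma~\ref{main:lmb} produces constant $1$ (Cauchy--Schwarz together with $\|f_j\|_{L^2(\Omega)}=1$), it gives $\|G - G_n\|_H \le \inf_{\psi\in\mathcal{N}}\|(G - G_n) + \psi\|_{L^2(\Omega\times\Omega)}$; taking the infimum over $G_n$ on both sides — which preserves the inequality since it holds pointwise in $G_n$ — produces the first bound. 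If one wishes to avoid attainment issues for the outer infimum, the same step goes through with $\varepsilon$-near-minimizers and a passage to the limit.

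For the right inequality I would use only that $\mathcal{N}$ is a linear subspace of $L^2(\Omega\times\Omega)$, hence $0 \in \mathcal{N}$: for every $\mu_n \in \Sigma_{n,M}(\mathbb{D})$ the choice $\psi = 0$ shows $\inf_{\psi\in\mathcal{N}}\|G - \mu_n + \psi\|_{L^2(\Omega\times\Omega)} \le \|G - \mu_n\|_{L^2(\Omega\times\Omega)}$, and taking the infimum over $\mu_n$ yields the claim.

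I do not expect a genuine obstacle here: the argument is bookkeeping layered on Lemma~\ref{main:lmb}. The two points that deserve an explicit sentence are (i) verifying that the residual $G - G_n$ and the competitors $G - \mu_n$ lie in $\mathcal{G}$ (so the seminorm and the quotient are meaningful), which follows from the linear structure of $\mathcal{G}$ together with $\Sigma_{n,M}(\mathbb{D}) \subset \mathcal{L}_1(\mathbb{D}) \cap L^2(\Omega\times\Omega)$, and (ii) pinning down the reading of the middle expression. Everything else is a one-line infimum manipulation.
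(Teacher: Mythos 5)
Your proposal is correct and matches the paper's proof essentially verbatim: the paper likewise applies the argument of Lemma~\ref{main:lmb} to the residual $G - \mu_n$ to get $\|G - \mu_n\|_H \le \|G - \mu_n + \psi\|_{L^2(\Omega\times\Omega)}$ for all $\mu_n \in \Sigma_{n,M}(\mathbb{D})$ and $\psi \in \mathcal{N}$, and then takes infima, with $\psi = 0$ handling the right-hand inequality. Your extra care about the reading of the middle term (implicit infimum over $G_n$) and about the membership of the residual in $\mathcal{G}$ are reasonable clarifications but do not change the argument.
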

\begin{proof}
Following the argument given in Lemma \ref{main:lmb}, for any given 
$\mu_n \in \Sigma_{n,M}(\mathbb{D})$ and $\psi \in \mathcal{N}$, we have that 
\begin{equation}
\|G - \mu_n\|_H \leq \|G - \mu_n + \psi\|_{L^2(\Omega\times\Omega)}. 
\end{equation}
Thus, it completes the proof. 
\end{proof}
The upper bound of $\inf_{\mu_n \in \Sigma_{n,M}(\mathbb{D})} \|G - \mu_n\|_{L^2(\Omega \times \Omega)}$ is given by $n^{-\frac{1}{2} - \frac{2k+1}{4d}}$. On the other hand, if the null space $\mathcal{N}$ becomes large, then the estimate for $\|G - G_n\|_H$ can be much smaller by the action of the function $\psi \in \mathcal{N}.$ Thus, as the null space $\mathcal{N}$ gets larger, we expect that the accuracy approximating $G$ in terms of $\|\cdot\|_H$ will improve. This is validated in numerical experiments. Lastly, these estimates can lead to the error in inference as well. We note that the following relation holds between $u$ and $G$: 
\begin{equation}
u = G\star f, 
\end{equation}
from which we can obtain the following result: 
\begin{equation}
\label{eq:approx_rate_kernel}
\inf_{G_n \in \Sigma_{n,M}(\mathbb{D})} \|u - u_n\|_{L^2(\Omega)} = 
\inf_{G_n \in \Sigma_{n,M}(\mathbb{D})} \|G - G_n\|_H    \lesssim \frac{\|G\|_{\mathcal{L}_1(\mathbb{D})}}{\gamma} \epsilon_n. 
\end{equation}
Thus we established that the inference error induced by the Green function approximation by neural network is of the similar estimate. 

\subsection{Shallow neural network in practice}

The approximation theory of shallow neural networks has been well-developed in \cite{nn-barron1993universal,nn-breiman1993hinging,nn-klusowski2018approximation,nn-domingo2021tighter,ogatheory-siegel2024sharp}. However, it is widely observed in practice that shallow neural networks struggle to effectively approximate functions with sharp transitions or rapid oscillations\cite{nn-hong2022activation, nn-chen2022bridging, nn-grossmann2024can}. In \cite{nn-zhang2023shallow}, two important findings were highlighted. First, shallow ReLU neural networks exhibit a low-pass filter nature under finite machine precision, limiting their ability to approximate functions with sharp transitions or rapid oscillations. Second, to accurately approximate discontinuous functions, shallow neural networks require an exponentially increasing width as the dimensionality grows. These two challenges are inevitable when approximating kernel functions using shallow neural networks. Specifically, kernel functions double the dimension of the original problem, and they may exhibit singularities and behave oscillatory, such as Green's function of Helmholtz equations.

\subsection{Point-wise kernel estimation by OGA}

To reduce the difficulties mentioned above, instead of directly approximating the kernel $G(\mathbf{x}, \mathbf{y})$ by a shallow neural network with $[\mathbf{x}, \mathbf{y}] \in \mathbb{R}^{2d}$, we fit several slices of kernel $G(\mathbf{y} | \mathbf{x})$ with only $\mathbf{y} \in \mathbb{R}^{d}$, see Figure \ref{fig:pw}. Specifically, given a set of response location $\{ \mathbf{x}_s \}^{m_u}_{s=1}$, we have the following approximation, 
\begin{equation}
\begin{split}
\label{eq:integral_transform_approx}
u(\mathbf{x}_s) = \int_\Omega G^s(\mathbf{y}) f(\mathbf{y}) \mathrm{d} \mathbf{y},
\end{split}
\end{equation}
where we denote $G(\mathbf{y} | \mathbf{x}_s)$ as $G^s(\mathbf{y})$ for simplicity. The main task in this setting is to train $\{G^s(\mathbf{y})\}_{s=1}^{m_u}$ from data pairs. 
\begin{figure}[h]
\centering
\includegraphics[scale=.7]{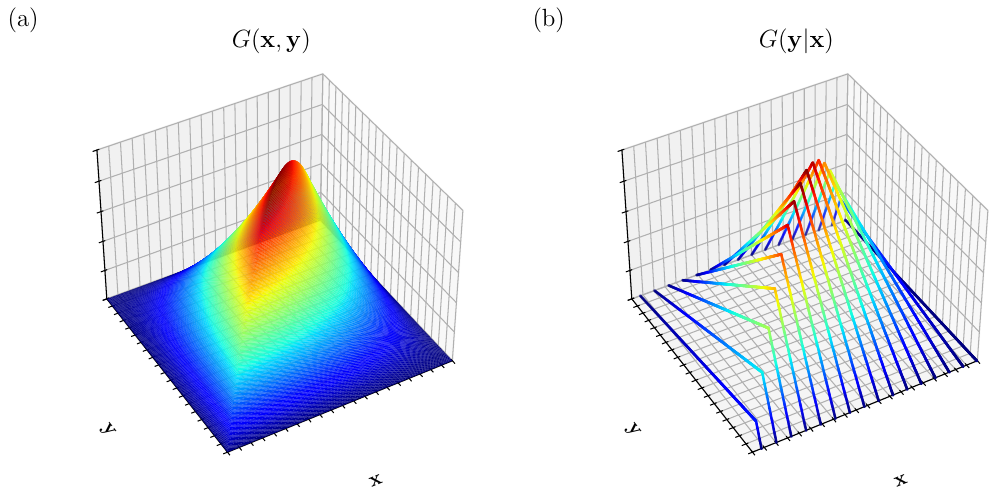}
\caption{(a) Kernel function. (b) Slices of kernel function.} 
\label{fig:pw}
\end{figure}
We name this approach as the point-wise kernel estimation, and the corresponding OGA(PW-OGA) is shown in Algorithm \ref{OGA_rand_kernel_pw}. The advantage of point-wise kernel estimation is mainly on the optimal approximation rate, which is same as the $d$-dimensional function approximation rather than a $2d$-dimensional function approximation.

\begin{algorithm}[h]
\caption{Orthogonal greedy algorithm for point-wise kernel estimation with randomized discrete dictionaries}\label{OGA_rand_kernel_pw}

Given a dataset $\{f_j, u_j\}^{N}_{j=1}$

\begin{algorithmic}
\FOR{$s = 1, 2, \cdots $}
    \STATE{
    Set $G_{s,0} = 0$    
    }
    \FOR{$n = 1, 2, \cdots $}
    \STATE{
    Random dictionary construction 
    \begin{equation}
    \label{eq:rand_kernel_dict}
    \mathbb{D}_{N_R} = \bigl\{ \pm \sigma_{k}(\mathcal{S}(\phi_i) \cdot \mathbf{y} + \beta_i) : \phi_i, \beta_i \sim \mathrm{Uniform}(R), 1 \leq i \leq N_R\}
    \end{equation}}
    \STATE{
    Subproblem optimization
    \begin{equation}
    \label{eq:kernel_max}
    g_n = \argmax_{g \in \mathbb{D}_{N_R}} \langle G_s - G_{s,n-1}, g \rangle_H    
    \end{equation}}
    \STATE{
    Orthogonal projection
    \begin{equation}
    G_{s,n} = P_n(G_s)
    \end{equation}
    }    
    \ENDFOR

\ENDFOR
\end{algorithmic}
\end{algorithm}
More precisely, let 
\begin{equation}
\mathcal{G}_{pw} := \left \{ \tend{G} = \begin{pmatrix}
G^1 \\
\vdots \\
G^{m_u} 
\end{pmatrix} \in \left [ \mathcal{L}_1(\mathbb{D}) \right ]^{m_u} : \exists \tenq{G} \in \mathbb{R}^{m_u \times m_f} \mbox{ such that } \tenq{G} \tend{f} = \begin{pmatrix} 
(G^1 \star f)(\mathbf{x_1})   \\ 
\vdots \\
(G^{m_u} \star f)(\mathbf{x_{m_u}}) 
\end{pmatrix} \right \}. 
\end{equation}
In this setting, for the $n$-neurons approximation $G^{s}_n$, we see that 
\begin{equation}\label{min:pwoga} 
\min_{\tend{G_n} \in \left[ \mathcal{L}_1(\mathbb{D}) \right ]^{m_u}} \|\tend{G} - \tend{G_n}\|_H = 
\min_{G^1_n,\cdots G^{m_u}_n} \sum_{s=1}^{m_u} \|G^s - G_n^s\|_{H_s},
\end{equation}
where 
\begin{equation}
\|G^s - G^s_n\|_{H_s} = \frac{1}{N} \sum_{j=1}^N \|(G\star f_j)(\mathbf{x}_s) - G^s_n \star f_j\|_{L^2(\Omega)}.
\end{equation}
Following the argument in Theorem \ref{main:thm}, we can establish that 
\begin{equation}
\label{eq:Gs_H_norm_bound}
\|G^s - G_n^s\|_{H_s} \leq \inf_{\phi \in \mathcal{N}_s} \|G^s - G_n^s + \phi\|_{L^2(\Omega)} \leq \|G^s - G_n^s\|_{L^2(\Omega)} \lesssim n^{-\frac{1}{2} - \frac{2k+1}{2d}},
\end{equation} 
where 
\begin{equation}
\label{eq:null_space2}
\mathcal{N}_s = \{ G \in \mathcal{G} : (G \star f) (\mathbf{x}_s) = 0,~ \forall f \in \mathcal{K} \}.
\end{equation}
Thus, we establish the approximation error in the point-wise kernel estimation by OGA in the following theorem. 
\begin{theorem}
The weak OGA for the optimization problem \eqref{min:pwoga} has the following approximation error estimate:
\begin{equation}
\inf_{\tend{G_n} \in \left[ \mathcal{L}_1(\mathbb{D}) \right ]^{m_u}} \|G - \tend{G_n}\|_H \lesssim c(m_u) n^{-\frac{1}{2} - \frac{2k+1}{2d}}.
\end{equation}
where $c(m_u)$ is a constant that is dependent on $m_u$ and $\| \cdot \|_H$ is either semi-norm or norm. 
\end{theorem}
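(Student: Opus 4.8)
The plan is to exploit the fact that the point-wise formulation \eqref{min:pwoga} \emph{decouples} the optimization across the $m_u$ response locations, so that the theorem reduces to $m_u$ independent applications of the single-slice rate already recorded in \eqref{eq:Gs_H_norm_bound}, with the resulting $m_u$ prefactors collected into the constant $c(m_u)$.

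First I would record the splitting. Since, by \eqref{min:pwoga}, the norm on $[\mathcal{L}_1(\mathbb{D})]^{m_u}$ is the sum $\sum_{s=1}^{m_u}\|\cdot\|_{H_s}$ of the slice semi-norms, and since the inner loop of Algorithm \ref{OGA_rand_kernel_pw} runs an independent $n$-step greedy iteration on each slice $G^s$ against its own dictionary \eqref{eq:rand_kernel_dict} of $d$-variable $\mathrm{ReLU}^k$ neurons, the slice variables decouple and
\[
\inf_{\tend{G_n}\in[\mathcal{L}_1(\mathbb{D})]^{m_u}}\|\tend{G}-\tend{G_n}\|_H \;=\; \sum_{s=1}^{m_u}\ \inf_{G_n^s\in\Sigma_{n,M}(\mathbb{D})}\|G^s-G_n^s\|_{H_s}.
\]

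Second, for each fixed $s$ I would apply Theorem \ref{main:thm} in the Hilbert space carrying the semi-inner product $\langle\cdot,\cdot\rangle_{H_s}$ with null space $\mathcal{N}_s$ from \eqref{eq:null_space2} --- the orthogonal projection being understood, as in the remark preceding Theorem \ref{main:thm}, as the projection onto $\mathcal{N}_s^{\perp}$, and the randomized-dictionary construction supplying the weak parameter $\gamma$. Dominating $\|\cdot\|_{H_s}$ by the quotient norm on $L^2(\Omega)/\mathcal{N}_s$ exactly as in Lemma \ref{main:lmb}, and then invoking the sharp $\mathrm{ReLU}^k$ estimate \eqref{eq:approx_rate_relu} on the $d$-dimensional domain $\Omega$, gives the per-slice bound of \eqref{eq:Gs_H_norm_bound},
\[
\inf_{G_n^s\in\Sigma_{n,M}(\mathbb{D})}\|G^s-G_n^s\|_{H_s} \;\lesssim\; \frac{\|G^s\|_{\mathcal{L}_1(\mathbb{D})}}{\gamma}\; n^{-\frac12-\frac{2k+1}{2d}},
\]
with the $d$-dimensional --- rather than $2d$-dimensional --- exponent, which is precisely where the point-wise gain over Theorem \ref{main:thm} originates. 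Summing over $s=1,\dots,m_u$ then yields the claim with $c(m_u)=\gamma^{-1}\sum_{s=1}^{m_u}\|G^s\|_{\mathcal{L}_1(\mathbb{D})}\le \gamma^{-1}\,m_u\,\max_{1\le s\le m_u}\|G^s\|_{\mathcal{L}_1(\mathbb{D})}$.

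The step I expect to be the main obstacle is not the summation but making this constant honest: one must verify that every slice $G^s=G(\cdot\mid\mathbf{x}_s)$ genuinely lies in $\mathcal{L}_1(\mathbb{D})$ and that $\sup_{1\le s\le m_u}\|G^s\|_{\mathcal{L}_1(\mathbb{D})}<\infty$, so that $c(m_u)$ is truly $O(m_u)$ rather than growing faster in $m_u$; this is a (Barron-type) regularity requirement on the kernel $G$ that should be carried as a standing hypothesis. A secondary point to check is that the metric-entropy bound $\epsilon_n\lesssim n^{-\frac12-\frac{2k+1}{2d}}$ underlying Theorem \ref{main:thm} transfers verbatim to each semi-norm $\|\cdot\|_{H_s}$; it does, because $\|\cdot\|_{H_s}$ is controlled by the $L^2(\Omega)/\mathcal{N}_s$ quotient norm and the $\mathrm{ReLU}^k$ dictionary lives in the $d$-dimensional ambient space, so the entropy numbers of $\mathrm{co}(\mathbb{D})$ measured in $\|\cdot\|_{H_s}$ are bounded by their $L^2(\Omega)$ counterparts.
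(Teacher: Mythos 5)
Your proposal matches the paper's own (implicit) argument: the paper obtains this theorem precisely by the decoupling identity \eqref{min:pwoga}, a slice-by-slice application of the $d$-dimensional rate \eqref{eq:Gs_H_norm_bound} (itself derived from Lemma \ref{main:lmb} and the sharp $\mathrm{ReLU}^k$ bound \eqref{eq:approx_rate_relu} via the quotient-norm domination), and a sum over $s=1,\dots,m_u$ that produces the $c(m_u)$ factor. The explicit constant $c(m_u)=\gamma^{-1}\sum_{s=1}^{m_u}\|G^s\|_{\mathcal{L}_1(\mathbb{D})}$ you record, and the standing hypothesis that each slice $G^s$ lies in $\mathcal{L}_1(\mathbb{D})$ with uniformly bounded variation norm, are exactly the details the paper leaves implicit but does not contradict.
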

\begin{remark}
Generally, the upper bound shows that if $m_u$ is smaller than $n^{\frac{2k+1}{4d}}$, then it is advantageous to use {\rm PW-OGA}. On the other hand, the effect of the null space in actual error significantly helps, making the use of {\rm PW-OGA} favorable.  
\end{remark}

\section{Numerical Results}
\label{sec:result}

In this section, we present the numerical results of the OGA for kernel estimation and point-wise kernel estimation across various linear operator learning problems to demonstrate its effectiveness. The test problems encompass both irregular domain cases and high-dimensional examples, including those with and without exact kernels. In all cases, the OGA is performed with the shallow ReLU neural network, and the random dictionary size is set to 512 at each iteration. For comparison, we use GreenLearning (GL) \cite{green-boulle2022gl}, Fourier neural operator (FNO) \cite{no-li2020fourier}, and Deep operator network (DON) \cite{don-lu2021learning} as baseline methods. The details of these baseline methods are provided in \ref{app:baseline}. We implement all algorithms in Python with Pytorch package. All computations are performed on a single Nvidia RTX A6000 GPU with 48GB memory.

To assess the accuracy of different methods, we compute the relative $L_2$ error of the solutions in the test sets and the relative $L_2$ error of the kernels, when exact kernel functions are available. The relative $L_2$ error of solutions $\epsilon_u$ and the relative $L_2$ error of kernel $\epsilon_G$ are defined as follows,
\begin{equation} 
\label{eq:rl2_metric}
\epsilon_{u} = \frac{1}{N} \sum_{j=1}^{N} \frac{\| u_j - \Tilde{u}_j \|_{L^2(\Omega)}}{ \|u_j\|_{L^2(\Omega)} }, \quad \epsilon_{G} = \frac{\| G - \Tilde{G} \|_{L^2(\Omega \times \Omega)}}{ \| G \|_{L^2(\Omega \times \Omega)}},
\end{equation}
where $\Tilde{u}$ and $\Tilde{G}$ denotes the corresponding approximants by neural network. To visualize the results from different methods, we also compute the point-wise absolute error of a sample solution or a kernel, denoted as $E_u(\mathbf{x})$ and $E_G(\mathbf{x}, \mathbf{y})$ respectively, 
\begin{equation} 
\label{eq:abs}
E_{u}(\mathbf{x}) = | u(\mathbf{x}) - \Tilde{u}(\mathbf{x}) |, \quad E_{G}(\mathbf{x}, \mathbf{y}) = | G(\mathbf{x}, \mathbf{y}) - \Tilde{G}(\mathbf{x}, \mathbf{y}) |.
\end{equation}

Numerical integration is necessary to evaluate the inner product of the function, the intergal, orthogonal projection, and the relative error $L_2$. We employ the Monte-Carlo integration method\cite{mc-binder2012monte} in all cases.

\subsection{Numerical results for one-dimensional problems}

We select the one-dimensional Poisson equation and Helmholtz equation as test examples. The forcing functions are sampled from a Gaussian process\cite{gp-williams2006gaussian} with zero mean and a length scale of 0.01 by Chebfun software\cite{cheb-Driscoll2014}. The discrete forcing vectors are evaluated on a uniform grid that spans $[-1,1]$ with $501$ points. The exact kernels are available for both problems, and the corresponding solutions are computed using numerical kernel integration. We split the data into training and testing sets of 500 and 200 samples, respectively.

\paragraph{One-dimensional Poisson equation}
We first give the example of a one-dimensional Poisson equation with homogeneous boundary conditions:
\begin{equation} 
\label{eq:poisson1D}
\begin{split}
-\frac{d^2 u}{d x^2} & = f(x), \quad x \in [0,1] \\
u(0) & = u(1) = 0.
\end{split}
\end{equation}
The associated Green's function is available in closed-form\cite{no-li2020gno} as 
\begin{equation}
    \begin{split} \label{eq:poisson_green}
        G(x,y) = 
        \left\{
            \begin {aligned}
                & x(y-1) \quad & x \leq y \\
                & y(x-1) \quad & x > y                
            \end{aligned}
        \right.
    \end{split}
\end{equation}
where $(x, y) \in [0,1]^2$. 

\begin{figure}[h!]
    \centering
    \includegraphics[scale=0.6]{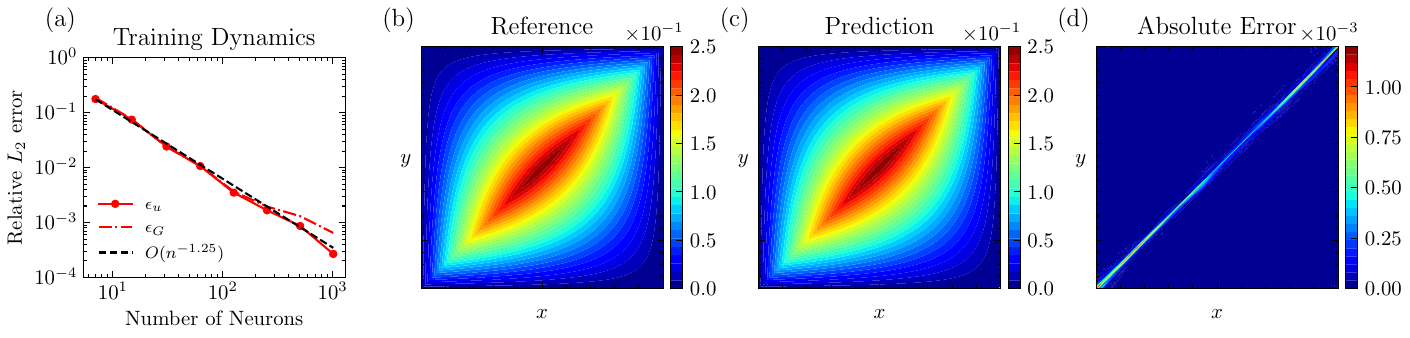}
    \caption{Result of 1D Poisson problem: (a) The training dynamics of OGA method. (b) Exact Green's function. (c) Learned Green's function. (d) Absolute error $E_G$.}
    \label{fig:poisson1D_result}
\end{figure}


\begin{table}[h!]
\centering
\begin{tabular}{ccc}
\toprule 
    Method & $\epsilon_G$  & $\epsilon_u$ \\
\midrule
OGA & \textbf{5.7327e-04} & \textbf{1.2411e-04} \\ 
GL & 5.8218e-03 & 5.6763e-03 \\ 
FNO & - & 3.5094e-02 \\ 
DON & - & 1.8072e-01 \\
\bottomrule
\end{tabular}
\caption{The numerical error comparison for different methods on one-dimensional Poisson problem.}
\label{table:poisson1D_compare}
\end{table}

\paragraph{One-dimensional Helmholtz equation} We consider the one-dimensional Helmholtz equation with homogeneous boundary conditions:
\begin{equation} 
\label{eq:helmholtz1D}
\begin{split}
\frac{d^2 u}{d x^2} + K^2 u & = f(x), \quad x \in [0,1] \\
u(0) & = u(1) = 0.
\end{split}
\end{equation}
The corresponding Green's function has the closed-form\cite{green-boulle2022gl} as
\begin{equation}
    \begin{split} \label{eq:helomholz_green}
        G(x,y) = 
        \left\{
            \begin {aligned}
                & \frac{\sin(K x) \sin(K (y-1))}{K\sin(K)}  \quad & x \geq y \\
                & \frac{\sin(K y) \sin(K (x-1))}{K\sin(K)} \quad & x < y                
            \end{aligned}
        \right.
    \end{split}
\end{equation}
where $(x, y) \in [0,1]^2, K=15$. 

\begin{figure}[h!]
    \centering
    \includegraphics[scale=0.6]{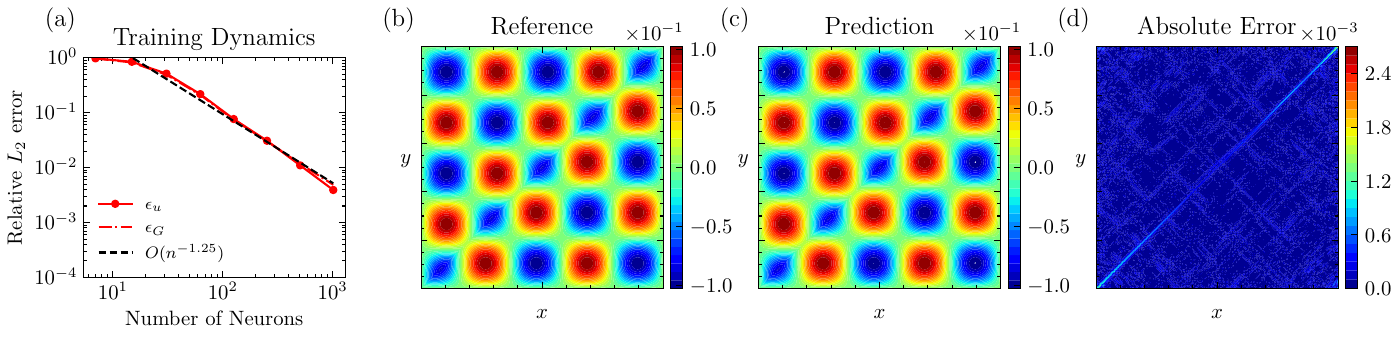}
    \caption{Result of 1D Helmholtz problem: (a) The training dynamics of OGA method. (b) Exact Green's function. (c) Learned Green's function. (d) Absolute error $E_G$.}
    \label{fig:helmhotz1D_result}
\end{figure}


\begin{table}[h!]
\centering
\begin{tabular}{ccc}
\toprule 
    Method & $\epsilon_G$  & $\epsilon_u$ \\
\midrule
OGA & \textbf{2.3186e-03} & \textbf{1.4509e-03} \\ 
GL & 1.9849e-02 & 2.0166e-02 \\ 
FNO & - & 3.8912e-02 \\ 
DON & - & 2.4209e-01 \\ 
\bottomrule
\end{tabular}
\caption{The numerical error comparison for different methods on one-dimensional Helmholtz problem.}
\label{table:helmholtz1D_compare}
\end{table}

The numerical results for the one-dimensional Poisson problem are presented in Figure \ref{fig:poisson1D_result} and Table \ref{table:poisson1D_compare}. Figure \ref{fig:poisson1D_result}(a) illustrates the relationship between the number of neurons and the approximation errors $\epsilon_G$ and $\epsilon_u$. The optimal approximation rate (\ref{eq:approx_rate_kernel}) is shown as a black dotted reference line. Both learned kernel and solution functions achieve the optimal approximation rate. Compared to baseline methods, OGA demonstrates an order of magnitude improvement on relative $L_2$ error. Figure \ref{fig:helmhotz1D_result} and Table \ref{table:helmholtz1D_compare} summarize the numerical results for the one-dimensional Helmholtz problem. The oscillatory nature of the Green's function of Helmholtz equation, as shown in Figure \ref{fig:helmhotz1D_result} (b), poses a greater challenge for a shallow neural networks compared to the counterpart of Poisson equation. However, OGA still exhibits the optimal convergence rate and significant improvements in accuracy.

\subsection{Numerical results for two-dimensional problems}
\label{sec:oga-2D}

For two-dimensional problems, we also choose Cosine, Poisson, and Helmholtz problems as test examples. To verify the effectiveness of OGA for kernel estimation on different geometry, we create two meshes on a unit disk domain(833 nodes) and an irregular H-shaped domain(997 nodes) by Gmsh software\cite{gmsh-geuzaine2009gmsh}, see Figure \ref{fig:mesh2D}. We generate random forcing functions using \texttt{randnfundisk} for the disk domain and \texttt{randnfun2} for the H-shaped domain from Chebfun software \cite{cheb-Driscoll2014}, where the wavelength parameter is set to $0.2$ for all problems. The discrete forcing functions are evaluated at each node. We directly calculate the response functions of the cosine kernel by numerical integration. For Poisson and Helmholtz problems, the corresponding solutions are solved by finite element method using DOLFINx software\cite{fenics-dolfinx2023preprint}. We split the data into training and testing sets of 1000 and 500 samples. 

\begin{figure}[h!]
    \centering
    \includegraphics[scale=0.4]{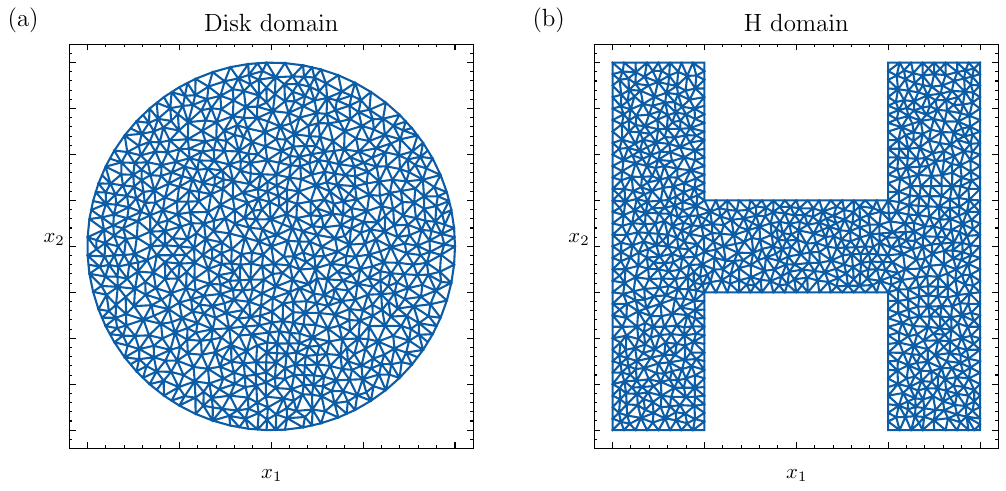}
    \caption{Computational domain of 2D problems: (a) unit disk domain. (b) H shaped domain.}
    \label{fig:mesh2D}
\end{figure}

\paragraph{Two-dimensional Cosine kernel integral} Two-dimensional Cosine kernel integral on domain $\Omega$ with wave number $k$:
\begin{equation} 
\label{eq:cos2D}
\begin{split}
u(\mathbf{x}) &= \int_{\Omega} \cos(k\pi \|\mathbf{x} - \mathbf{y} \|_2) f(\mathbf{y}) \mathrm{d} \mathbf{y}, \quad \mathbf{x} \in \Omega.
\end{split}
\end{equation}

\paragraph{Two-dimensional Poisson equation} Two-dimensional Poisson equation defined on domain $\Omega$ with homogeneous boundary conditions:
\begin{equation} 
\label{eq:poisson2D}
\begin{split}
-\nabla \cdot (\nabla u(\mathbf{x})) &= f(\mathbf{x}), \quad \mathbf{x} \in \Omega, \\
u(\mathbf{x}) &= 0, \quad \mathbf{x} \in \partial \Omega.
\end{split}
\end{equation}

\paragraph{Two-dimensional Helmholtz equation} Two-dimensional Helmholtz equation defined on domain $\Omega$ with homogeneous boundary conditions:
\begin{equation} 
\label{eq:helmholtz2D}
\begin{split}
\nabla \cdot (\nabla u(\mathbf{x})) + K^2 u(\mathbf{x}) &= f(\mathbf{x}), \quad \mathbf{x} \in \Omega, \\
u(\mathbf{x}) &= 0, \quad \mathbf{x} \in \partial \Omega,
\end{split}
\end{equation}
where $K=10$.

\subsubsection{Results on OGA for kernel estimation}

The numerical results for the cosine kernel with wave number $\{1, 2, 4\}$ on the unit disk domain are plotted in Figure \ref{fig:cos2D_converge}. We observe that numerical approximations can achieve the optimal convergence order in both cases. However, more neurons are needed to go through the "warm-up" stage for a high wave number cosine kernel, as the kernel becomes more oscillatory.

\begin{figure}[h!]
    \centering
    \includegraphics[scale=0.5]{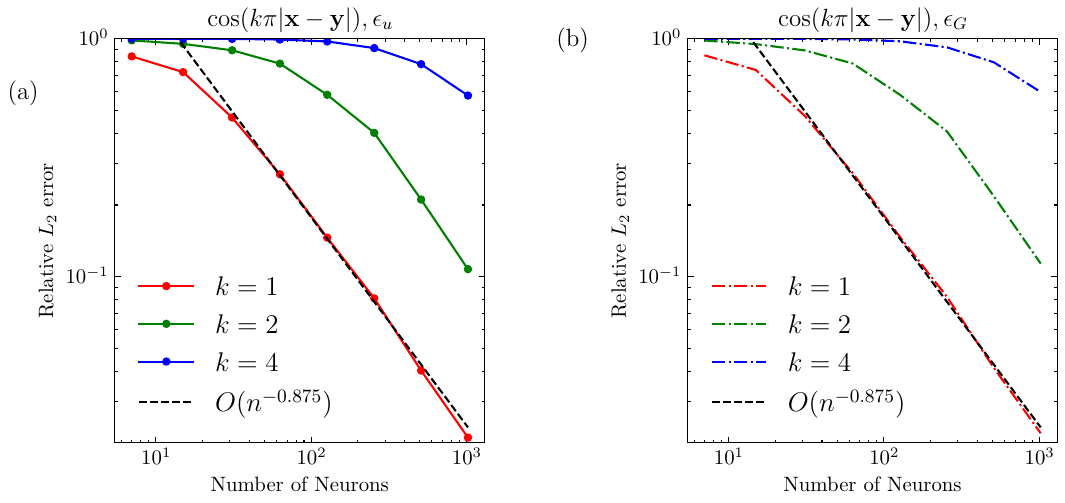}
    \caption{Training dynamics of cosine kernel integral: (a) relative $L_2$ error of solutions. (b) relative $L_2$ error of kernel}
    \label{fig:cos2D_converge}
\end{figure}

Figure \ref{fig:ph2D_converge} displays the numerical result of the 2D Poisson and Helmholtz problems. The corresponding kernels have singularities on $\mathbf{x} = \mathbf{y}$ \cite{green-lin2023bi}. Although we will learn the bounded approximations numerically, the approximations still have a very sharp transition in each case, which requires even more neurons to achieve the optimal convergence rate and leads to a long "warm-up" stage.

\begin{figure}[h!]
    \centering
    \includegraphics[scale=0.5]{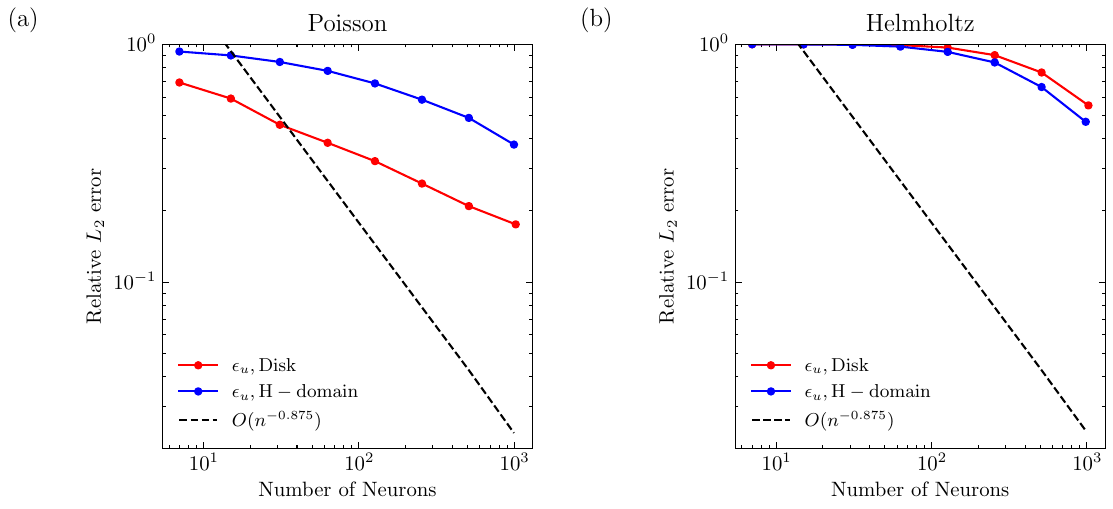}
    \caption{Training dynamics of 2D Poisson and Helmholtz problems: (a) Poisson problems. (b) Helmholtz problems}
    \label{fig:ph2D_converge}
\end{figure}

The numerical results show that more neurons are needed for the shallow ReLU neural network to approximate functions with sharp transitions or rapid oscillations, which is consistent with \cite{nn-zhang2023shallow}. In practice, under the limitation of finite machine precision, we may fail to solve linear systems at OGA projection step with a large number of neurons. These two difficulties prevent us from learning better results with OGA on directly learning kernels from data.

\subsubsection{PW-OGA results for two-dimensional problems}
\label{sec:PW-OGA_2D}
We test the same two-dimensional problems for PW-OGA. The training dynamics of PW-OGA is showed in Figure \ref{fig:cos2D_converge_pw} and Figure \ref{fig:ph2D_converge_pw}. We use shallow ReLU neural network to approximate $G(\mathbf{y} | \mathbf{x})$ which is a $d$-dimensional function as forcing functions. The optimal convergence rate of 2D function fitting is $O(n^{-1.25})$. For the unit disk cosine problem, similar to the direct OGA method, a high wave number cosine kernel still needs to go through a relatively long "warm-up" stage. After the warm-up stage, both $\epsilon_u$ and $\epsilon_G$ could achieve optimal approximation rate. However, we observe the "overfitting" phenomenon as the number of neurons increases, where $\epsilon_u$ converges even faster while $\epsilon_G$ diverges, see Figure \ref{fig:cos2D_converge_pw}. The overfitting phenomenon in this case is not attributable to overfitting the train dataset $\{f_j\}^{N}_{j=1}$, but rather to the function space $\mathcal{K}$(\ref{eq:data_space}). In terms of operator learning, overfitting to a function space is acceptable for applications where no out-of-range data exists. Since the exact form of kernel function is not available, only the accuracy of the response functions matters. 

For the Poisson and Helmholtz problems, we obtain the similar result as cosine problems, see Figure \ref{fig:ph2D_converge_pw}, and compare it with OGA for kernel estimation, GL and DeepONet. The accuracy of PW-OGA achieves the order magnitude of improvement in terms of operator learning compared to others; see Table \ref{table:result2D}. The sample response functions are shown in Figure \ref{fig:poisson2D} \ref{fig:poisson2DH} \ref{fig:helmholtz2D} \ref{fig:helmholtz2DH}.

\begin{figure}[h!]
    \centering
    \includegraphics[scale=0.5]{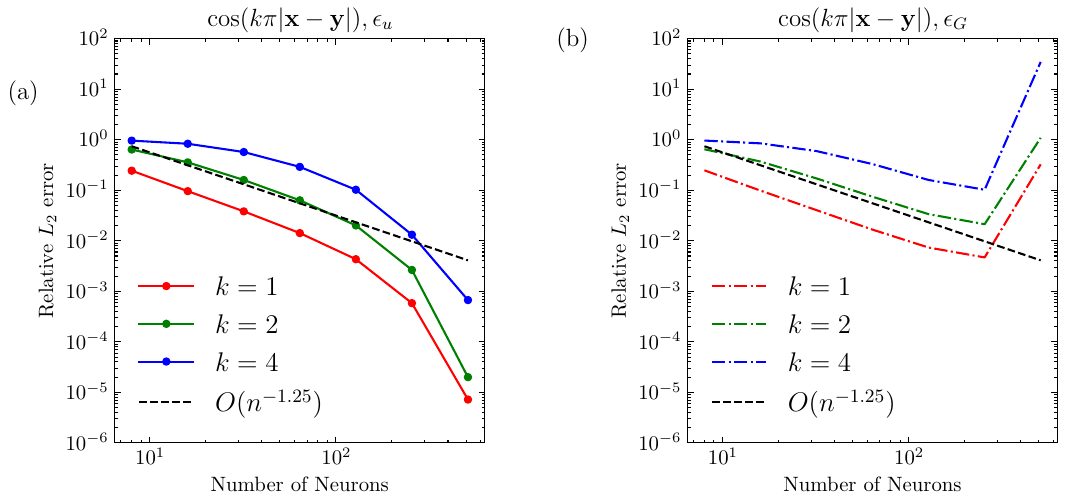}
    \caption{Training dynamics of 2D test problems with pair-wise kernel estimation. (a) relative $L_2$ error of solutions. (b) relative $L_2$ error of kernel}
    \label{fig:cos2D_converge_pw}
\end{figure}

\begin{figure}[h!]
    \centering
    \includegraphics[scale=0.5]{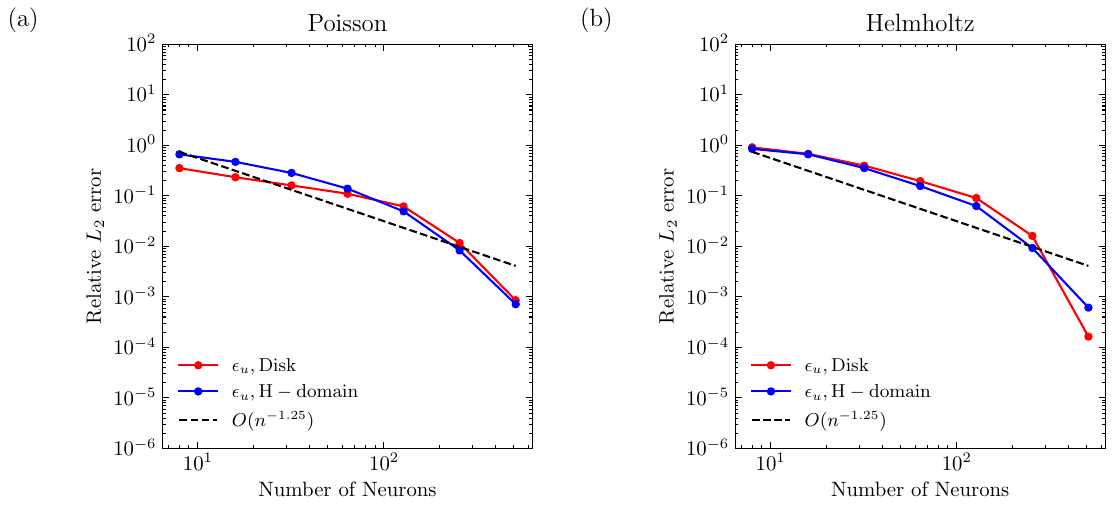}
    \caption{Training dynamics of 2D test problems with pair-wise kernel estimation. (a) Poisson problems. (b) Helmholtz problems}
    \label{fig:ph2D_converge_pw}
\end{figure}

\begin{table}[h!]
\centering
\begin{tabular}{ lcccc }
    \toprule
         & \multicolumn{2}{c}{Poisson}    & \multicolumn{2}{c}{Helmholtz}\\
    \cmidrule(lr){2-3}\cmidrule(lr){4-5}
         &  Disk & H-domain & Disk & H-domain \\
    \midrule
    PW-OGA & \textbf{8.5628e-04} & \textbf{7.1716e-04} & \textbf{1.6378e-04} & \textbf{6.1449e-04} \\ 
    OGA & 1.7499e-01 & 3.7873e-01 & 5.5332e-01 & 4.6881e-01 \\ 
    GL & 4.0472e-02 & 7.1289e-02 & 1.4822e-01 & 2.0567e-01 \\ 
    DON & 3.3877e-01 & 4.9925e-01 & 2.1499e-01 & 2.4925e-01 \\
    \bottomrule
\end{tabular}
\caption{The numerical error for 2D problems on $\epsilon_u$, where PW-OGA represents OGA for point-wise kernel estimation.}
\label{table:result2D}
\end{table}

\begin{figure}[h!]
    \centering
    \includegraphics[scale=0.7]{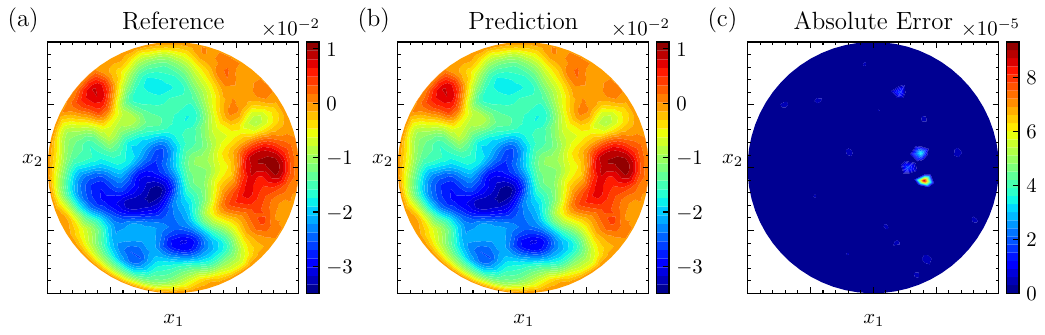}
    \caption{Sample result of 2D poisson problem on unit disk: (a) Reference FEM result $u$. (b) PW-OGA learned result $\Tilde{u}$. (c) Absolute error $|u - \Tilde{u}|$.}
    \label{fig:poisson2D}
\end{figure}

\begin{figure}[h!]
    \centering
    \includegraphics[scale=0.7]{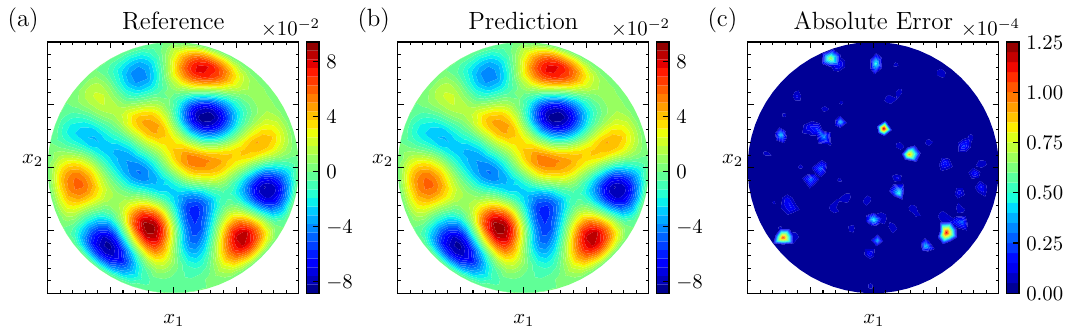}
    \caption{Sample result of 2D helmholtz problem on unit disk: (a) Reference FEM result $u$. (b) PW-OGA learned result $\Tilde{u}$. (c) Absolute error $|u - \Tilde{u}|$.}
    \label{fig:helmholtz2D}
\end{figure}

\begin{figure}[h!]
    \centering
    \includegraphics[scale=0.7]{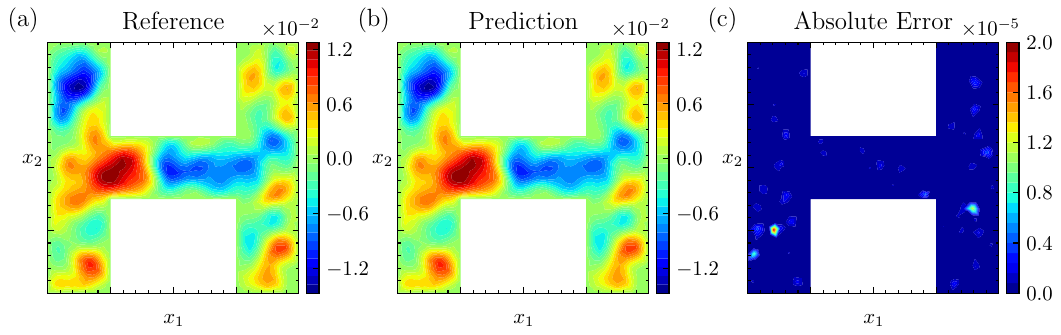}
    \caption{Sample result of 2D poisson problem on H shaped domain: (a) Reference FEM result $u$. (b) PW-OGA learned result $\Tilde{u}$. (c) Absolute error $|u - \Tilde{u}|$.}
    \label{fig:poisson2DH}
\end{figure}

\begin{figure}[h!]
    \centering
    \includegraphics[scale=0.7]{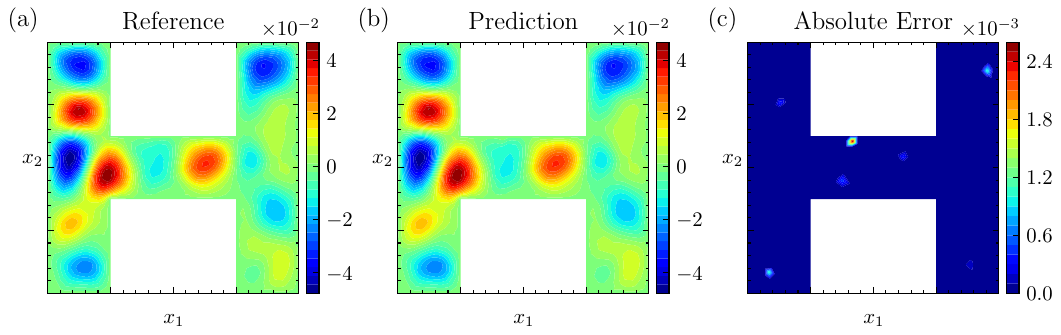}
    \caption{Sample result of 2D helmholtz problem on H shaped domain: (a) Reference FEM result $u$. (b) PW-OGA learned result $\Tilde{u}$. (c) Absolute error $|u - \Tilde{u}|$.}
    \label{fig:helmholtz2DH}
\end{figure}

\subsection{PW-OGA results for three-dimensional problems}
To further verify the effectiveness of OGA for point-wise kernel estimation in high dimension, we tested two 3D problems defined on the unit cube. The computational domain is discretized by \texttt{UnitCubeMesh} function \cite{gmsh-geuzaine2009gmsh} with 17 nodes on each axis. The 3D random forcing functions are generated by \texttt{GaussianRandomField} function from the Julia package\cite{grf-robbe2023gaussianrandomfields} with correlation length $0.2$, and the corresponding response functions are calculated from the Monte-Carlo integration method. We split the data into training and testing sets, each containing 1000 samples.

\paragraph{Three-dimensional cosine kernel} The first example is the 3D cosine kernel integral, where cosine kernel is a smooth kernel
\begin{equation} 
\label{eq:cos3D}
\begin{split}
G(\mathbf{x}, \mathbf{y}) &= \cos(2\pi \|\mathbf{x} - \mathbf{y}\|_2), \quad \mathbf{x}, \mathbf{y} \in [0,1]^3. \\
\end{split}
\end{equation}

\paragraph{Three-dimensional oscillatory singular kernel} The second example is a 3D oscillatory singular kernel integral,
\begin{equation} 
\label{eq:logcos3D}
\begin{split}
G(\mathbf{x}, \mathbf{y}) &= \log(\|\mathbf{x} - \mathbf{y}\|_2) \cos(2\pi \|\mathbf{x} - \mathbf{y}\|_2), \quad \mathbf{x}, \mathbf{y} \in [0,1]^3. \\
\end{split}
\end{equation}

The numerical results for 3D problems are shown in Figure (\ref{fig:ogapw-converge3D}), and the sample learned responses are shown in Figure (\ref{fig:cos3D}) (\ref{fig:logcos3D}). The optimal approximation rate of the shallow ReLU neural netowrk is $O(n^{-1})$ for the 3D function approximation. Table \ref{table:result3D} shows PW-OGA surpasses both FNO and DON significantly in both cases. As we discussed in \ref{sec:PW-OGA_2D}, $\epsilon_G$ roughly follows the optimal rate and gradually diverges with more neurons. 

\begin{figure}[h!]
    \centering
    \includegraphics[scale=0.7]{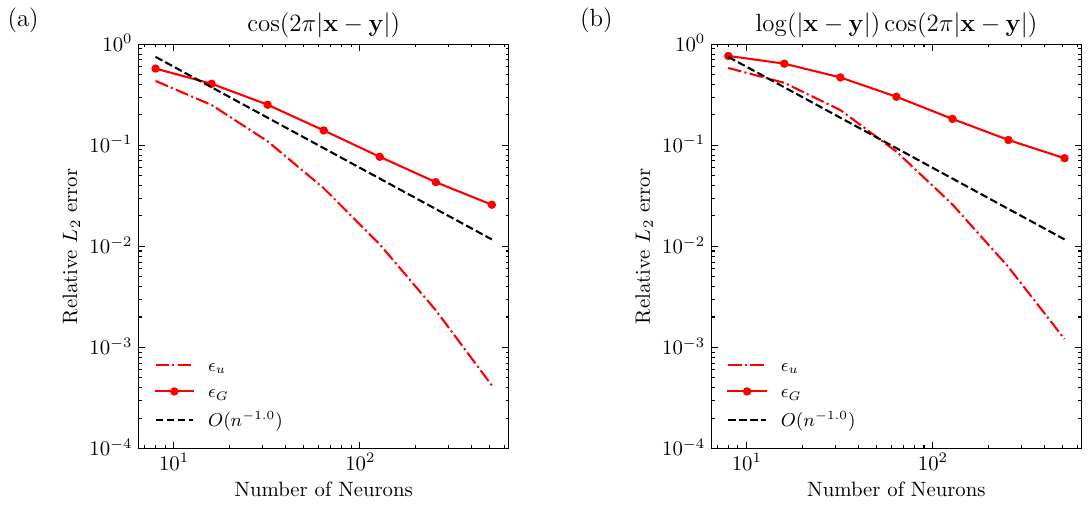}
    \caption{Training dynamics($\epsilon_u$) of 3D test problems with pair-wise kernel estimation.}
    \label{fig:ogapw-converge3D}
\end{figure}

\begin{figure}[h!]
    \centering
    \includegraphics[scale=0.7]{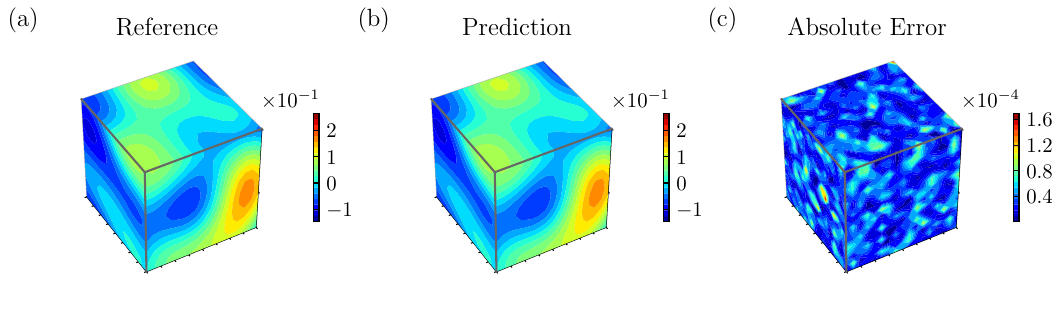}
    \caption{Sample responses of 3D smooth kernel: (a) Reference response $u$. (b) PW-OGA learned response $\Tilde{u}$. (c) Absolute error $|u - \Tilde{u}|$.}
    \label{fig:cos3D}
\end{figure}

\begin{figure}[h!]
    \centering
    \includegraphics[scale=0.7]{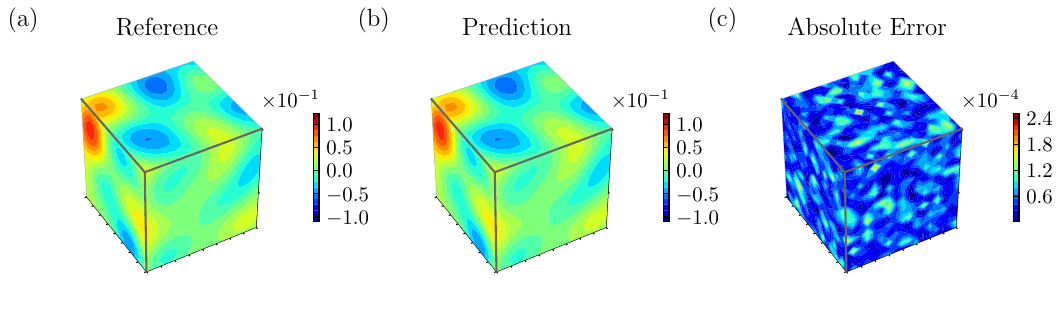}
    \caption{Sample responses of 3D oscillatory singular kernel: (a) Reference response $u$. (b) PW-OGA learned response $\Tilde{u}$. (c) Absolute error $|u - \Tilde{u}|$.}
    \label{fig:logcos3D}
\end{figure}

\begin{table}[h!]
\centering
\begin{tabular}{ lcccc }
    \toprule
         & \multicolumn{2}{c}{Smooth}    & \multicolumn{2}{c}{Oscillatory singular}\\
    \cmidrule(lr){2-3}\cmidrule(lr){4-5}
         &  $\epsilon_u$ & $\epsilon_G$ & $\epsilon_u$ & $\epsilon_G$ \\
    \midrule
    OGApw & \textbf{4.2071e-04} &  2.5830e-02 & \textbf{1.2106e-03} & 7.4495e-02 \\ 
    FNO & 4.3766e-02 & - & 3.6210e-02 & - \\ 
    DON & 3.9122e-02 & - & 5.8760e-02 & - \\ 
    \bottomrule
\end{tabular}
\caption{The numerical error point-wise OGA in average relative $L_2$ norms of Kernel function and responses.}
\label{table:result3D}
\end{table}

\subsubsection{Overfitting treatment for PW-OGA} \label{sec:overfit}

Theoretically, both $\epsilon_u$ and $\epsilon_G$ should converge under optimal approximation rate (\ref{eq:woga_approx_rate}) with a sufficiently large and diverse dataset, i.e., when $\mathcal{N} = \{0\}$. However, for $\mathcal{N} \neq \{0\}$, overfitting is expected, as dicussed in Section \ref{sec:PW-OGA_2D}, based on theoretical analysis (\ref{eq:G_H_norm_bound}) and (\ref{eq:Gs_H_norm_bound}). For a given reference kernel $G$, numerical results show that the neural network approximation $\Tilde{G}$ yields a small inference error, i.e., small $\epsilon_u$. However, we observe that $\epsilon_G$ may not be small when $\mathcal{N} \neq \{0\}$, which is consistent with theory, presented in Lemma \ref{num:lem}. Therefore, a larger null space $\mathcal{N}$ will lead to a faster convergence on $\epsilon_u$, but it also tends to cause $\epsilon_G$ to diverge.

To verify this overfitting phenomenon, we generate random function $\{f_j\}^N_{j=1}$ as training data on the unit disk domain as described in (\ref{sec:oga-2D}) with different wavelength parameter $\lambda = {0.1, 0.2, 0.5}$. Larger wavelengths imply more correlated data, resulting in a larger $\mathcal{N}$. Specifically, we construct a matrix $F$, where each column represents the vector $\tend{f_j}$, the evaluation of $f_j$ at the nodes on the unit disk. To measure the linear dependence among the columns, we perform Singular Value Decomposition(SVD) on $F$. A significant number of near-zero singular values indicates that $F$ is low-rank, and the columns are highly dependent. As a test example, we use the cosine kernel integral $u(\mathbf{x}) = \int_{\Omega} \cos(4\pi \|\mathbf{x} - \mathbf{y} \|_2) f(\mathbf{y}) \mathrm{d} \mathbf{y}$. Figure (\ref{fig:overfit2D}) verifies that the overfitting phenomenon correlates strongly with the column dependency of $F$. For $\gamma = 0.5$, the matrix $F$ has many near-zero singular values, whereas for $\gamma=0.1$, all singular values of $F$ are non-zero. Consequently, $\epsilon_u$ converges faster as $\gamma$ increases, while $\epsilon_G$ tends to diverge. Using more linearly independent training data can help mitigate overfitting in PW-OGA, though it may deteriorate $\epsilon_u$. Nevertheless, PW-OGA still outperforms all other baseline methods, as shown in Tables \ref{table:overfit2D} \ref{table:overfit3D}.

\begin{figure}[h!]
    \centering
    \includegraphics[scale=0.7]{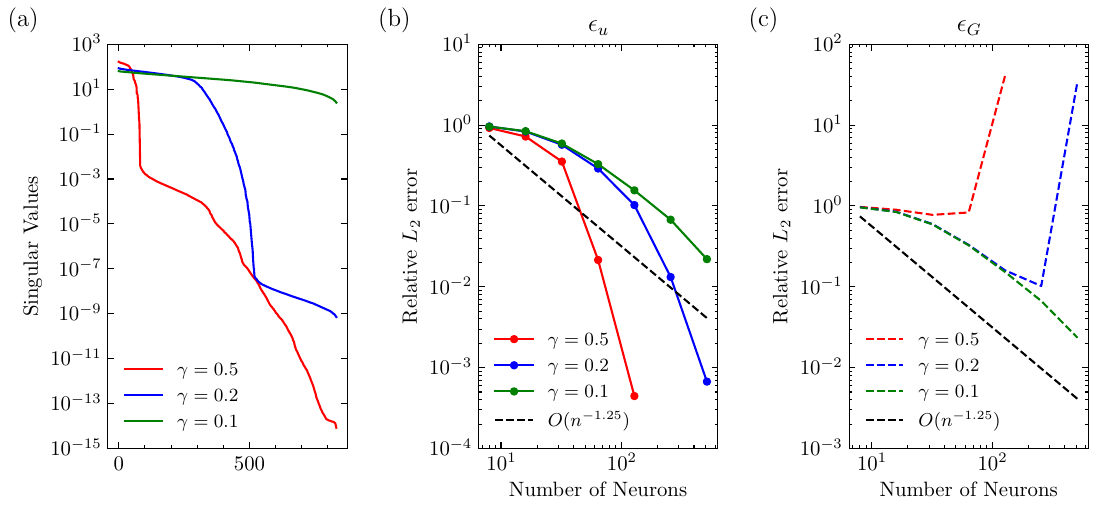}
    \caption{Overfitting phenomenon analysis: (a) Singular values of $F$, arranged in decreasing order. (b) Training dynamics of $\epsilon_u$ (c) Training dynamics of $\epsilon_G$}
    \label{fig:overfit2D}
\end{figure}

\begin{table}[h!]
\centering
\begin{tabular}{ lcccc }
    \toprule
         & \multicolumn{2}{c}{$\gamma = 0.2$}    & \multicolumn{2}{c}{$\gamma = 0.1$}\\
    \cmidrule(lr){2-3}\cmidrule(lr){4-5}
     Poisson   &  Disk & H-domain & Disk & H-domain \\
    \midrule
    PW-OGA & \textbf{8.5628e-04} & \textbf{7.1716e-04} & \textbf{9.1918e-02} & \textbf{6.8071e-02} \\ 
    GL & 4.0472e-02 & 7.1289e-02 &  1.9927e-01 & 1.5573e-01 \\ 
    DON & 3.3877e-01 & 4.9925e-01 & 7.3519e-01 & 8.9244e-01 \\
    \midrule
     Helmholtz &  Disk & H-domain & Disk & H-domain  \\
    \midrule
    PW-OGA & \textbf{1.6378e-04} & \textbf{6.1449e-04} & \textbf{1.1407e-01} & \textbf{1.1402e-01} \\ 
    GL & 1.4822e-01 & 2.0567e-01 &  2.4238e-01 & 2.6776e-01 \\ 
    DON & 2.1499e-01 & 2.4925e-01 & 6.3956e-01 & 8.2246e-01 \\
    \bottomrule
    \end{tabular}
\caption{The numerical error for 2D problems on $\epsilon_u$ with different dataset.}
\label{table:overfit2D}
\end{table}

\begin{table}[h!]
\centering
\begin{tabular}{ lcccc }
    \toprule
         & \multicolumn{2}{c}{$\gamma = 0.2$}    & \multicolumn{2}{c}{$\gamma = 0.1$}\\
    \cmidrule(lr){2-3}\cmidrule(lr){4-5}
    Smooth   &  $\epsilon_u$ & $\epsilon_G$ &  $\epsilon_u$ & $\epsilon_G$ \\
    \midrule
    PW-OGA & \textbf{4.2071e-04} & \textbf{2.5830e-02} & \textbf{1.2106e-03} & \textbf{7.4495e-02} \\ 
    FNO & 4.3766e-02 & - &  3.6210e-02 & - \\ 
    DON & 3.9122e-02 & - & 5.8760e-02 & - \\
    \midrule
    Oscillatory singular &  $\epsilon_u$ & $\epsilon_G$ &  $\epsilon_u$ & $\epsilon_G$ \\
    \midrule
    PW-OGA & \textbf{6.2046e-03} & \textbf{2.2252e-02} & \textbf{1.7726e-02} & \textbf{6.3057e-02} \\ 
    FNO & 9.8489e-02 & - &  8.3703e-02 & - \\ 
    DON & 2.8677e-02 & - & 5.6553e-02 & - \\
    \bottomrule
    \end{tabular}
\caption{The numerical error for 3D problems on $\epsilon_u$ and $\epsilon_G$ with different dataset.}
\label{table:overfit3D}
\end{table}

\section{Conclusion}
\label{sec:conclusion}

In this paper, we propose two novel orthogonal greedy algorithm (OGA) frameworks tailored for linear operator learning and kernel estimation tasks. These OGA methods are based on a new semi-inner product, defined using finite data and kernel integrals, ensuring theoretical optimal approximation rates, even in the seminorm. Our analysis reveals that the approximation rate improves as the null space of the operator associated with the seminorm increases. To further tackle the challenges of approximating functions with sharp transitions or rapid oscillations and enhance approximation rates, we extend OGA to a point-wise kernel estimation approach (PW-OGA), which approximates a set of $d$-dimensional functions instead of a $2d$-dimensional kernel function, achieving faster convergence rates. The proposed methods are validated through numerical experiments on operator learning for linear PDEs and kernel estimation tasks across 1D to 3D cases, including problems on irregular domains. Results demonstrate that both OGA and PW-OGA achieve theoretical optimal approximation rates and consistently outperform baseline methods, delivering order-of-magnitude improvements in accuracy. Future work will focus on investigating the causes of long \textcolor{blue}{``}warm-up" stage problem in OGA and exploring multi-level domain decomposition methods to further enhance the performance of shallow neural network under the OGA framework. 

\section{Acknowledgments}
Ran Zhang was supported in part by China Natural National Science Foundation (grant No.  22341302), the National Key Research and Development Program of China (grant No. 2020YFA0713602, 2023YFA1008803), and the Key Laboratory of Symbolic Computation and Knowledge Engineering of Ministry of Education of China housed at Jilin University.

\bibliographystyle{unsrt}
\bibliography{OGAGreenFunction}

\begin{thebibliography}{10}

\bibitem{review-karniadakis2021physics}
George~Em Karniadakis, Ioannis~G Kevrekidis, Lu~Lu, Paris Perdikaris, Sifan
  Wang, and Liu Yang.
\newblock Physics-informed machine learning.
\newblock {\em Nature Reviews Physics}, 3(6):422--440, 2021.

\bibitem{pinn-raissi2019physics}
Maziar Raissi, Paris Perdikaris, and George~E Karniadakis.
\newblock Physics-informed neural networks: A deep learning framework for
  solving forward and inverse problems involving nonlinear partial differential
  equations.
\newblock {\em Journal of Computational physics}, 378:686--707, 2019.

\bibitem{dgm-sirignano2018dgm}
Justin Sirignano and Konstantinos Spiliopoulos.
\newblock Dgm: A deep learning algorithm for solving partial differential
  equations.
\newblock {\em Journal of computational physics}, 375:1339--1364, 2018.

\bibitem{dritz-yu2018deep}
Bing Yu et~al.
\newblock The deep ritz method: a deep learning-based numerical algorithm for
  solving variational problems.
\newblock {\em Communications in Mathematics and Statistics}, 6(1):1--12, 2018.

\bibitem{green-boulle2022gl}
Nicolas Boull{\'e}, Christopher~J Earls, and Alex Townsend.
\newblock Data-driven discovery of green’s functions with
  human-understandable deep learning.
\newblock {\em Scientific reports}, 12(1):4824, 2022.

\bibitem{don-lu2021learning}
Lu~Lu, Pengzhan Jin, Guofei Pang, Zhongqiang Zhang, and George~Em Karniadakis.
\newblock Learning nonlinear operators via deeponet based on the universal
  approximation theorem of operators.
\newblock {\em Nature machine intelligence}, 3(3):218--229, 2021.

\bibitem{no-li2020fourier}
Zongyi Li, Nikola~Borislavov Kovachki, Kamyar Azizzadenesheli, Burigede Liu,
  Kaushik Bhattacharya, Andrew~M. Stuart, and Anima Anandkumar.
\newblock Fourier neural operator for parametric partial differential
  equations.
\newblock In {\em 9th International Conference on Learning Representations,
  {ICLR} 2021, Virtual Event, Austria, May 3-7, 2021}. OpenReview.net, 2021.

\bibitem{opt-kingma2014adam}
Diederik~P Kingma.
\newblock Adam: A method for stochastic optimization.
\newblock {\em arXiv preprint arXiv:1412.6980}, 2014.

\bibitem{opt-fletcher2000practical}
Roger Fletcher.
\newblock {\em Practical methods of optimization}.
\newblock John Wiley \& Sons, 2000.

\bibitem{greentheory-boulle2023ellipticefficient}
Nicolas Boull{\'e}, Diana Halikias, and Alex Townsend.
\newblock Elliptic pde learning is provably data-efficient.
\newblock {\em Proceedings of the National Academy of Sciences},
  120(39):e2303904120, 2023.

\bibitem{rfm-chen2022rfm}
Jingrun Chen, Xurong Chi, Zhouwang Yang, et~al.
\newblock Bridging traditional and machine learning-based algorithms for
  solving pdes: the random feature method.
\newblock {\em J Mach Learn}, 1:268--98, 2022.

\bibitem{rfm-Lamperski2024ApproximationWR}
Andrew~G. Lamperski and Tyler Lekang.
\newblock Approximation with random shallow relu networks with applications to
  model reference adaptive control.
\newblock {\em ArXiv}, abs/2403.17142, 2024.

\bibitem{rfm-Rahimi2007RandomFF}
Ali Rahimi and Benjamin Recht.
\newblock Random features for large-scale kernel machines.
\newblock In {\em Neural Information Processing Systems}, 2007.

\bibitem{rfm-Weinan2019ACA}
E~Weinan, Chao Ma, and Lei Wu.
\newblock A comparative analysis of optimization and generalization properties
  of two-layer neural network and random feature models under gradient descent
  dynamics.
\newblock {\em Science China Mathematics}, 63:1235 -- 1258, 2019.

\bibitem{rfm-zhang2024transferable}
Zezhong Zhang, Feng Bao, Lili Ju, and Guannan Zhang.
\newblock Transferable neural networks for partial differential equations.
\newblock {\em Journal of Scientific Computing}, 99(1):2, 2024.

\bibitem{em-DONG2021114129}
Suchuan Dong and Zongwei Li.
\newblock Local extreme learning machines and domain decomposition for solving
  linear and nonlinear partial differential equations.
\newblock {\em Computer Methods in Applied Mechanics and Engineering},
  387:114129, 2021.

\bibitem{em-HUANG2006489}
Guang-Bin Huang, Qin-Yu Zhu, and Chee-Kheong Siew.
\newblock Extreme learning machine: Theory and applications.
\newblock {\em Neurocomputing}, 70(1):489--501, 2006.
\newblock Neural Networks.

\bibitem{em-Lee2024AND}
Chang-Ock Lee, Youngkyu Lee, and Byungeun Ryoo.
\newblock A nonoverlapping domain decomposition method for extreme learning
  machines: Elliptic problems.
\newblock {\em ArXiv}, abs/2406.15959, 2024.

\bibitem{fnm-xu2020fnm}
Jinchao Xu.
\newblock Finite neuron method and convergence analysis.
\newblock {\em Communications in Computational Physics}, 28:1707--1745, 2020.

\bibitem{fnm-park2022fnm}
Jongho Park, Jinchao Xu, and Xiaofeng Xu.
\newblock A neuron-wise subspace correction method for the finite neuron
  method.
\newblock {\em CoRR}, abs/2211.12031, 2022.

\bibitem{oga-barron2008approximation}
Andrew~R. Barron, Albert Cohen, Wolfgang Dahmen, and Ronald~A. DeVore.
\newblock {Approximation and learning by greedy algorithms}.
\newblock {\em The Annals of Statistics}, 36(1):64 -- 94, 2008.

\bibitem{oga-pati1993OMP}
Y.C. Pati, R.~Rezaiifar, and P.S. Krishnaprasad.
\newblock Orthogonal matching pursuit: recursive function approximation with
  applications to wavelet decomposition.
\newblock In {\em Proceedings of 27th Asilomar Conference on Signals, Systems
  and Computers}, pages 40--44 vol.1, 1993.

\bibitem{oga-barron1993universal}
Andrew~R Barron.
\newblock Universal approximation bounds for superpositions of a sigmoidal
  function.
\newblock {\em IEEE Transactions on Information theory}, 39(3):930--945, 1993.

\bibitem{woga-temlyakov2000weak}
Vladimir~N Temlyakov.
\newblock Weak greedy algorithms.
\newblock {\em Advances in Computational Mathematics}, 12(2):213--227, 2000.

\bibitem{oga-siegel2023greedy}
Jonathan~W Siegel, Qingguo Hong, Xianlin Jin, Wenrui Hao, and Jinchao Xu.
\newblock Greedy training algorithms for neural networks and applications to
  pdes.
\newblock {\em Journal of Computational Physics}, 484:112084, 2023.

\bibitem{oga-siegel2024entropy}
Yuwen Li and Jonathan~W. Siegel.
\newblock Entropy-based convergence rates of greedy algorithms.
\newblock {\em Mathematical Models and Methods in Applied Sciences},
  34(05):779--802, 2024.

\bibitem{oga-park2024randomized}
Jinchao Xu and Xiaofeng Xu.
\newblock Randomized greedy algorithms for neural network optimization.
\newblock {\em arXiv preprint arXiv:2407.17763}, 2024.

\bibitem{ogatheory-siegel2024sharp}
Jonathan~W Siegel and Jinchao Xu.
\newblock Sharp bounds on the approximation rates, metric entropy, and n-widths
  of shallow neural networks.
\newblock {\em Foundations of Computational Mathematics}, 24(2):481--537, 2024.

\bibitem{don-jin2022mionet}
Pengzhan Jin, Shuai Meng, and Lu~Lu.
\newblock Mionet: Learning multiple-input operators via tensor product.
\newblock {\em SIAM Journal on Scientific Computing}, 44(6):A3490--A3514, 2022.

\bibitem{don-wang2021learning}
Sifan Wang, Hanwen Wang, and Paris Perdikaris.
\newblock Learning the solution operator of parametric partial differential
  equations with physics-informed {DeepONets}.
\newblock {\em Science Advances}, 7(40):eabi8605, 2021.

\bibitem{don-wang2022improved}
Sifan Wang, Hanwen Wang, and Paris Perdikaris.
\newblock Improved architectures and training algorithms for deep operator
  networks.
\newblock {\em Journal of Scientific Computing}, 92(2):35, 2022.

\bibitem{don-gao2024adaptive}
Zhiwei Gao, Liang Yan, and Tao Zhou.
\newblock Adaptive operator learning for infinite-dimensional bayesian inverse
  problems.
\newblock {\em SIAM/ASA Journal on Uncertainty Quantification},
  12(4):1389--1423, 2024.

\bibitem{don-guo2024ib}
Ling Guo, Hao Wu, Yan Wang, Wenwen Zhou, and Tao Zhou.
\newblock Ib-uq: Information bottleneck based uncertainty quantification for
  neural function regression and neural operator learning.
\newblock {\em Journal of Computational Physics}, 510:113089, 2024.

\bibitem{no-kovachki2023neural}
Nikola Kovachki, Zongyi Li, Burigede Liu, Kamyar Azizzadenesheli, Kaushik
  Bhattacharya, Andrew Stuart, and Anima Anandkumar.
\newblock Neural operator: Learning maps between function spaces with
  applications to pdes.
\newblock {\em Journal of Machine Learning Research}, 24(89):1--97, 2023.

\bibitem{no-li2020multipole}
Zongyi Li, Nikola Kovachki, Kamyar Azizzadenesheli, Burigede Liu, Andrew
  Stuart, Kaushik Bhattacharya, and Anima Anandkumar.
\newblock Multipole graph neural operator for parametric partial differential
  equations.
\newblock {\em Advances in Neural Information Processing Systems},
  33:6755--6766, 2020.

\bibitem{no-li2020gno}
Zongyi Li, Nikola~B. Kovachki, Kamyar Azizzadenesheli, Burigede Liu, Kaushik
  Bhattacharya, Andrew~M. Stuart, and Anima Anandkumar.
\newblock Neural operator: Graph kernel network for partial differential
  equations.
\newblock {\em CoRR}, abs/2003.03485, 2020.

\bibitem{no-li2023fourier}
Zongyi Li, Daniel~Zhengyu Huang, Burigede Liu, and Anima Anandkumar.
\newblock Fourier neural operator with learned deformations for pdes on general
  geometries.
\newblock {\em Journal of Machine Learning Research}, 24(388):1--26, 2023.

\bibitem{no-rahman2023uno}
Md~Ashiqur Rahman, Zachary~E Ross, and Kamyar Azizzadenesheli.
\newblock U-{NO}: U-shaped neural operators.
\newblock {\em Transactions on Machine Learning Research}, 2023.

\bibitem{no-tran2023factorized}
Alasdair Tran, Alexander Mathews, Lexing Xie, and Cheng~Soon Ong.
\newblock Factorized fourier neural operators.
\newblock In {\em The Eleventh International Conference on Learning
  Representations}, 2023.

\bibitem{no-you2022learning}
Huaiqian You, Quinn Zhang, Colton~J Ross, Chung-Hao Lee, and Yue Yu.
\newblock Learning deep implicit fourier neural operators (ifnos) with
  applications to heterogeneous material modeling.
\newblock {\em Computer Methods in Applied Mechanics and Engineering},
  398:115296, 2022.

\bibitem{trans-cao2021choose}
Shuhao Cao.
\newblock Choose a transformer: Fourier or galerkin.
\newblock {\em Advances in neural information processing systems},
  34:24924--24940, 2021.

\bibitem{trans-guo2022transformer}
Ruchi Guo, Shuhao Cao, and Long Chen.
\newblock Transformer meets boundary value inverse problems.
\newblock In {\em The Eleventh International Conference on Learning
  Representations}, 2022.

\bibitem{trans-hao2023gnot}
Zhongkai Hao, Zhengyi Wang, Hang Su, Chengyang Ying, Yinpeng Dong, Songming
  Liu, Ze~Cheng, Jian Song, and Jun Zhu.
\newblock Gnot: A general neural operator transformer for operator learning.
\newblock In {\em International Conference on Machine Learning}, pages
  12556--12569. PMLR, 2023.

\bibitem{trans-kissas2022learning}
Georgios Kissas, Jacob~H Seidman, Leonardo~Ferreira Guilhoto, Victor~M
  Preciado, George~J Pappas, and Paris Perdikaris.
\newblock Learning operators with coupled attention.
\newblock {\em Journal of Machine Learning Research}, 23(215):1--63, 2022.

\bibitem{trans-lee2022meshindependent}
Seungjun Lee.
\newblock Mesh-independent operator learning for partial differential
  equations.
\newblock In {\em ICML 2022 2nd AI for Science Workshop}, 2022.

\bibitem{trans-li2024scalable}
Zijie Li, Dule Shu, and Amir Barati~Farimani.
\newblock Scalable transformer for pde surrogate modeling.
\newblock {\em Advances in Neural Information Processing Systems}, 36, 2024.

\bibitem{mg-he2019mgnet}
Juncai He and Jinchao Xu.
\newblock Mgnet: A unified framework of multigrid and convolutional neural
  network.
\newblock {\em Science china mathematics}, 62:1331--1354, 2019.

\bibitem{mg-he2024mgno}
Juncai He, Xinliang Liu, and Jinchao Xu.
\newblock Mg{NO}: Efficient parameterization of linear operators via multigrid.
\newblock In {\em The Twelfth International Conference on Learning
  Representations}, 2024.

\bibitem{mg-zhu2023enhanced}
Jianqing Zhu, Juncai He, and Qiumei Huang.
\newblock An enhanced v-cycle mgnet model for operator learning in numerical
  partial differential equations.
\newblock {\em Computational Geosciences}, pages 1--12, 2023.

\bibitem{mg-zhu2023fv}
Jianqing Zhu, Juncai He, Lian Zhang, and Jinchao Xu.
\newblock Fv-mgnet: Fully connected v-cycle mgnet for interpretable time series
  forecasting.
\newblock {\em Journal of Computational Science}, 69:102005, 2023.

\bibitem{green-zhang2022modnet}
Lulu Zhang, Tao Luo, Yaoyu Zhang, Zhi-Qin~John Xu, and Zheng Ma.
\newblock Mod-net: A machine learning approach via model-operator-data network
  for solving pdes.
\newblock {\em Communications in Computational Physics}, 32(2):299--335, 2022.

\bibitem{green-gin2021deepgreen}
Craig~R Gin, Daniel~E Shea, Steven~L Brunton, and J~Nathan Kutz.
\newblock Deepgreen: deep learning of green’s functions for nonlinear
  boundary value problems.
\newblock {\em Scientific reports}, 11(1):21614, 2021.

\bibitem{green-lin2023bi}
Guochang Lin, Fukai Chen, Pipi Hu, Xiang Chen, Junqing Chen, Jun Wang, and
  Zuoqiang Shi.
\newblock Bi-greennet: learning green’s functions by boundary integral
  network.
\newblock {\em Communications in Mathematics and Statistics}, 11(1):103--129,
  2023.

\bibitem{green-ji2023deep}
Junqing Ji, Lili Ju, and Xiaoping Zhang.
\newblock Deep surrogate model for learning green's function associated with
  linear reaction-diffusion operator.
\newblock {\em arXiv preprint arXiv:2310.03642}, 2023.

\bibitem{green-peng2023deep}
Rixi Peng, Juncheng Dong, Jordan Malof, Willie~J Padilla, and Vahid Tarokh.
\newblock Deep generalized green's functions.
\newblock {\em arXiv preprint arXiv:2306.02925}, 2023.

\bibitem{green-teng2022learning}
Yuankai Teng, Xiaoping Zhang, Zhu Wang, and Lili Ju.
\newblock Learning green’s functions of linear reaction-diffusion equations
  with application to fast numerical solver.
\newblock In {\em Mathematical and Scientific Machine Learning}, pages 1--16.
  PMLR, 2022.

\bibitem{green-lin2024green}
Ye~Lin, Young~Ju Lee, and Jiwei Jia.
\newblock Green multigrid network.
\newblock {\em arXiv preprint arXiv:2407.03593}, 2024.

\bibitem{ogatheory-siegel2024entropy}
Yuwen Li and Jonathan~W. Siegel.
\newblock Entropy-based convergence rates of greedy algorithms.
\newblock {\em Mathematical Models and Methods in Applied Sciences},
  34(05):779--802, 2024.

\bibitem{ogatheory-pisier1981remarques}
Gilles Pisier.
\newblock Remarques sur un r{\'e}sultat non publi{\'e} de b. maurey.
\newblock {\em S{\'e}minaire d'Analyse fonctionnelle (dit" Maurey-Schwartz")},
  pages 1--12, 1981.

\bibitem{follandbook}
Gerald~B. Folland.
\newblock {\em Real Analysis: Modern Techniques and Their Applications, 2nd
  Edition}.
\newblock Wiley, 2013.

\bibitem{mlmi-hernandez2005acoustics}
Isa{\'i}as {Hern{\'a}ndez Ram{\'i}rez}.
\newblock {\em Multilevel multi-integration algorithm for acoustics}.
\newblock Phd thesis - research ut, graduation ut, University of Twente,
  Netherlands, September 2005.

\bibitem{oga-zhang1993pga}
S.G. Mallat and Zhifeng Zhang.
\newblock Matching pursuits with time-frequency dictionaries.
\newblock {\em IEEE Transactions on Signal Processing}, 41(12):3397--3415,
  1993.

\bibitem{oga-jones1992simple}
Lee~K Jones.
\newblock A simple lemma on greedy approximation in hilbert space and
  convergence rates for projection pursuit regression and neural network
  training.
\newblock {\em The annals of Statistics}, pages 608--613, 1992.

\bibitem{Tikhomirov1993}
V.~M. Tikhomirov.
\newblock {\em $\epsilon$-Entropy and $\epsilon$-Capacity of Sets In Functional
  Spaces}, pages 86--170.
\newblock Springer Netherlands, Dordrecht, 1993.

\bibitem{9698049}
Jonathan~W. Siegel and Jinchao Xu.
\newblock Optimal convergence rates for the orthogonal greedy algorithm.
\newblock {\em IEEE Transactions on Information Theory}, 68(5):3354--3361,
  2022.

\bibitem{li2023entropy}
Yuwen Li and Jonathan Siegel.
\newblock Entropy-based convergence rates of greedy algorithms.
\newblock {\em M3AS}, 34(5), 2024.

\bibitem{nn-barron1993universal}
Andrew~R Barron.
\newblock Universal approximation bounds for superpositions of a sigmoidal
  function.
\newblock {\em IEEE Transactions on Information theory}, 39(3):930--945, 1993.

\bibitem{nn-breiman1993hinging}
Leo Breiman.
\newblock Hinging hyperplanes for regression, classification, and function
  approximation.
\newblock {\em IEEE Transactions on Information Theory}, 39(3):999--1013, 1993.

\bibitem{nn-klusowski2018approximation}
Jason~M Klusowski and Andrew~R Barron.
\newblock Approximation by combinations of relu and squared relu ridge
  functions with $\backslash$ell\^{} 1 and $\backslash$ell\^{} 0 controls.
\newblock {\em IEEE Transactions on Information Theory}, 64(12):7649--7656,
  2018.

\bibitem{nn-domingo2021tighter}
Carles Domingo-Enrich and Youssef Mroueh.
\newblock Tighter sparse approximation bounds for relu neural networks.
\newblock {\em arXiv preprint arXiv:2110.03673}, 2021.

\bibitem{nn-hong2022activation}
Qingguo Hong, Jonathan~W Siegel, Qinyang Tan, and Jinchao Xu.
\newblock On the activation function dependence of the spectral bias of neural
  networks.
\newblock {\em arXiv preprint arXiv:2208.04924}, 2022.

\bibitem{nn-chen2022bridging}
Jingrun Chen, Xurong Chi, Zhouwang Yang, et~al.
\newblock Bridging traditional and machine learning-based algorithms for
  solving pdes: the random feature method.
\newblock {\em J Mach Learn}, 1:268--98, 2022.

\bibitem{nn-grossmann2024can}
Tamara~G Grossmann, Urszula~Julia Komorowska, Jonas Latz, and Carola-Bibiane
  Sch{\"o}nlieb.
\newblock Can physics-informed neural networks beat the finite element method?
\newblock {\em IMA Journal of Applied Mathematics}, page hxae011, 2024.

\bibitem{nn-zhang2023shallow}
Shijun Zhang, Hongkai Zhao, Yimin Zhong, and Haomin Zhou.
\newblock Why shallow networks struggle with approximating and learning high
  frequency: A numerical study.
\newblock {\em arXiv preprint arXiv:2306.17301}, 2023.

\bibitem{mc-binder2012monte}
Kurt Binder, David~M Ceperley, J-P Hansen, MH~Kalos, DP~Landau, D~Levesque,
  H~Mueller-Krumbhaar, D~Stauffer, and J-J Weis.
\newblock {\em Monte Carlo methods in statistical physics}, volume~7.
\newblock Springer Science \& Business Media, 2012.

\bibitem{gp-williams2006gaussian}
Christopher~KI Williams and Carl~Edward Rasmussen.
\newblock {\em Gaussian processes for machine learning}, volume~2.
\newblock MIT press Cambridge, MA, 2006.

\bibitem{cheb-Driscoll2014}
T.~A Driscoll, N.~Hale, and L.~N. Trefethen.
\newblock {\em Chebfun Guide}.
\newblock Pafnuty Publications, 2014.

\bibitem{gmsh-geuzaine2009gmsh}
Christophe Geuzaine and Jean-Fran{\c{c}}ois Remacle.
\newblock Gmsh: A 3-d finite element mesh generator with built-in pre-and
  post-processing facilities.
\newblock {\em International journal for numerical methods in engineering},
  79(11):1309--1331, 2009.

\bibitem{fenics-dolfinx2023preprint}
Igor~A. Baratta, Joseph~P. Dean, Jørgen~S. Dokken, Michal Habera, Jack~S.
  Hale, Chris~N. Richardson, Marie~E. Rognes, Matthew~W. Scroggs, Nathan Sime,
  and Garth~N. Wells.
\newblock {DOLFINx: The next generation FEniCS problem solving environment},
  December 2023.

\bibitem{grf-robbe2023gaussianrandomfields}
Pieterjan Robbe.
\newblock Gaussianrandomfields.jl: A julia package to generate and sample from
  gaussian random fields.
\newblock {\em Journal of Open Source Software}, 8(89):5595, 2023.

\bibitem{don-lu2021deepxde}
Lu~Lu, Xuhui Meng, Zhiping Mao, and George~Em Karniadakis.
\newblock {DeepXDE}: A deep learning library for solving differential
  equations.
\newblock {\em SIAM Review}, 63(1):208--228, 2021.

\bibitem{act-boulle2020rational}
Nicolas Boull{\'e}, Yuji Nakatsukasa, and Alex Townsend.
\newblock Rational neural networks.
\newblock {\em Advances in neural information processing systems},
  33:14243--14253, 2020.

\end{thebibliography}

\appendix
\section{Appendix}
\label{app:baseline}
\subsection{Deep operator network baselines}

We implement the Deep operator network(DON) using the DeepXDE\cite{don-lu2021deepxde}. For all tasks, we run a mini-batch Adam optimizer iterations for a total number of 50000 iterations(batch size 64). The learning rate is set to 0.001. The architecture of DON for each task is presented in Table \ref{table:deeponet_arch}.

\begin{table}[h!]
\centering
\begin{tabular}{ lccc }
    \midrule
     & Branch net & Trunk net & Activation function  \\
    \midrule
    one-dimensional problem & [501,256,128,128,128] & [2,128,128,128,128] & ReLU \\ 
    two-dimensional problem(disk) & [833,512,256,128,128] & [4,128,128,128,128] &  ReLU \\ 
    two-dimensional problem(H-domain) & [997,512,256,128,128] & [4,128,128,128,128] &  ReLU \\ 
    three-dimensional problem & [4913,1024,512,256,128] & [6,128,128,128,128] &  ReLU \\ 
    \bottomrule
    \end{tabular}
\caption{Architectures of DeepONet for different tasks.}
\label{table:deeponet_arch}
\end{table}

\subsection{Fourier Neural Operator baselines}
We utilize the original Fourier Neural Operator(FNO) implementation\footnote{\url{https://github.com/zongyi-li/fourier_neural_operator}} as baseline. For all tasks, we run a mini-batch Adam optimizer for a total number of 500 epochs(12500 iterations with batch size 20). The learning rate is set to 0.001 and reduce to its half every 100 epochs. The architecture of FNO for each task is presented in Table \ref{table:fno_arch}. 

\begin{table}[h!]
\centering
\begin{tabular}{ lccccc }
    \midrule
     & Modes & Width & Padding & Layers & Activation function \\
    \midrule
    one-dimensional problem & 16 & 64 & 8 & 4 & ReLU \\ 
    three-dimensional problem & 8 & 12 & 6 & 4 & ReLU \\ 
    \bottomrule
    \end{tabular}
\caption{Architectures of FNO for different tasks.}
\label{table:fno_arch}
\end{table}

\subsection{Green Learning}

We choose GreenLearning(GL) implementation\footnote{\url{https://zenodo.org/records/7701683}} as baseline. For all tasks, we run a full-batch Adam optimizer for a total number of 10000 iterations. The learning rate is set to 0.001. The architecture of GL for each task is presented in Table \ref{table:gl_arch}.

\begin{table}[h!]
\centering
\begin{tabular}{ lccc }
    \midrule
     & network & Activation function  \\
    \midrule
    one-dimensional problem & [2,50,50,50,50,1] & Rational\cite{act-boulle2020rational} \\ 
    two-dimensional problem(disk) & [4,50,50,50,50,1] &  Rational \\ 
    two-dimensional problem(H-domain) & [4,50,50,50,50,1] &  Rational \\ 
    \bottomrule
    \end{tabular}
\caption{Architectures of DeepONet for different tasks.}
\label{table:gl_arch}
\end{table}



\end{document}